\def\newaliasedtheorem#1[#2]#3{
  \newaliascnt{#1@alt}{#2}
  \newtheorem{#1}[#1@alt]{#3}
  \expandafter\newcommand\csname #1@altname\endcsname{#3}
}
\numberwithin{equation}{section}
\newtheoremstyle{slanted}{\topsep}{\topsep}{\slshape}{}{\bfseries}{.}{.5em}{}
\theoremstyle{plain}
\newtheorem{theorem}{Theorem}[section]
\theoremstyle{definition}
\theoremstyle{remark}
\newcommand{\setN}{\mathbb{N}}
\newcommand{\setR}{\mathbb{R}}
\newcommand{\Z}{\mathbb{Z}}
\renewcommand{\subset}{\subseteq}
\newcommand{\defeq}{\mathrel{\mathop:}=}
\newcommand{\mres}{\mathbin{\vrule height 1.6ex depth 0pt width 0.13ex\vrule height 0.13ex depth 0pt width 1.3ex}}
\newcommand{\eps}{\varepsilon}
\let\altphi\phi
\let\phi\varphi
\let\varphi\altphi
\let\altphi\undefined
\DeclareMathOperator{\vol}{\mathrm{vol}}
\DeclareMathOperator{\supp}{supp}
\newcommand{\haus}{\mathcal{H}}
\newcommand{\leb}{\mathcal{L}}
\newcommand{\dist}{\mathsf{d}}
\newcommand{\meas}{\mathfrak{m}}
\newcommand{\hess}{{\mathrm{Hess}}}
\DeclareMathOperator{\RCD}{RCD}
\DeclareMathOperator{\ncRCD}{ncRCD}
\DeclareMathOperator{\CD}{CD}
\newfont{\tmpf}{cmsy10 scaled 2500}
\newcommand{\R}{\setR}							
\newcommand{\N}{\setN}							
\begin{document}

\title{On the intrinsic and extrinsic boundary for metric measure spaces with lower curvature bounds \thanks{\textit{2010 Mathematics Subject classification}. Primary 53C20, 53C21, Keywords: Riemannian curvature-dimension condition, Alexandrov space, optimal transport}
}

\author{
 Vitali Kapovitch
\thanks{University of Toronto, \url{vtk@math.toronto.edu}, V.K.  is partially supported by a Discovery grant from NSERC }
\and 
Xingyu Zhu
\thanks{Fields Institute, \url{xzhu@fields.utoronto.ca}} 
}
\date{\today}

\maketitle

\begin{abstract}
    We show that if an Alexandrov space $X$ has an Alexandrov subspace $\bar\Omega$ of the same dimension disjoint from the boundary of $X$, then the topological boundary of $\bar\Omega$ coincides with its Alexandrov boundary. Similarly, if a noncollapsed $\RCD(K,N)$ space $X$ has a noncollapsed $\RCD(K,N)$ subspace $\bar\Omega$ disjoint from boundary of $X$ and with mild boundary condition, then the topological boundary of $\bar\Omega$ coincides with its De Philippis-Gigli boundary. We then discuss some consequences about convexity of such type of equivalence.
\end{abstract}

\section{Introduction}
The intrinsic notion of boundary has been extensively studied for both noncollapsed $\RCD(K,N)$ spaces ($\ncRCD(K,N)$ in short) and Alexandrov spaces. When we say Alexandrov spaces, we always mean complete, geodesic, finite dimensional Alexandrov space. For an Alexandrov space $(A,\dist_A)$, { Burago, Gromov and  Perelman introduced the definition of boundary in \cite{BGP92}}, deonted by $\mathcal{F}A$, see \eqref{eq:Alexboundary}. {From the uniqueness of tangent cones along in interiors of geodesics proved by  Petrunin }in \cite{Petrunin98}, it can be deduced that the interior of an Alexandrov space, i.e.\ $A\setminus\mathcal{F}A$, is strongly convex, which means that  any geodesic joining points in the interior does not intersect $\mathcal{F}A$. For a $\ncRCD(K,N)$ space $(X,\dist,\haus^N)$, there are $2$ intrinsic definitions of boundary. One is defined by Kapovitch-Mondino in \cite{KapMon19}, in the same spirit of defining the boundary for an Alexandrov space, we denote this boundary also by $\mathcal{F}X$, see \eqref{eq:KMboundary}. The other is defined by De Philippis-Gigli in \cite{DPG17}, making use of the stratification of the singular set. We denote this boundary by $\partial X$, see \eqref{eq:DPGboundary}. 

In parallel to the strong convexity of the interior of an Alxeandrov space,  it is conjectured by De Philippis and Gigli \cite[Remark 3.8]{DPG17} that the interior of $X$, i.e.\ $X\setminus\partial X$ is strongly convex.  We will see that this conjecture follows from the conjecture that the two notions of the boundary of $\ncRCD(K,N)$ spaces  agree.


In this paper, we look at the boundary from an an extrinsic point of view, namely, given $K\in \R$ and positive integer $N$  we consider two sitiations

\begin{enumerate}
    \item\label{item:alex} an $N$-dimensional Alexandrov space has an $N$-dimensional Alexandrov subspace;
    \item \label{item:RCD} a $\ncRCD(K,N)$ space has a $\ncRCD(K,N)$ subspace with mild boundary control.
\end{enumerate}  

We prove that in the case of \eqref{item:alex}  the intrinsic boundary of an Alexandrov subspace coincides with the topological boundary, and in the case of \eqref{item:RCD} the De Phlippis-Gigli boundary coincides with the topological boundary. See the precise statement in Theorem \ref{thm:main1} and Theorem \ref{thm:main2} below. A direct consequence is  that  synthetic curvature bounds on a subspace automatically imply regularity of its topological boundary, for example topological structure and rectifiability, see \cite{BNS20}.

\begin{theorem}\label{thm:main1}
Let $(X,\dist_X)$ be an $N$-dimensional Alexandrov space, $N\in \setN$, and $\Omega\subset X$ be open, if $(\bar{\Omega},\dist_\Omega)$ is an Alexandrov space, $\bar{\Omega}\cap\mathcal{F}X=\varnothing$, and $\Omega={\rm Int}_{\rm top}(\bar{\Omega})$, where ${\rm Int}_{\rm top}(\bar\Omega)$ is the topological interior, i.e. the largest open subset of of $\bar\Omega$, 
then 
\begin{enumerate}
    \item\label{main1:item1} $\partial_{\rm top}\bar\Omega=\mathcal{F}\bar\Omega$;
    \item\label{main1:item2} any (minimizing) geodesic in $(\bar\Omega,\dist_\Omega)$ joining two points in $\Omega$ is a local geodesic of $(X,\dist_X)$;

    \item\label{main1:item3} any (minimizing) geodesic in $(\bar{\Omega},\dist_\Omega)$ is a quasi-geodesic in  $(X,\dist_X)$.
\end{enumerate}
\end{theorem}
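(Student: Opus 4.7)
My plan is to prove (1) by a tangent-cone analysis at points of $\partial_{\rm top}\bar\Omega$, and then derive (2) and (3) from (1) using Petrunin's strong convexity of the Alexandrov interior and the Perelman--Petrunin theory of quasi-geodesics.

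The inclusion $\mathcal{F}\bar\Omega\subset\partial_{\rm top}\bar\Omega$ in (1) is the easy direction: at any $p\in\Omega$, a small $\dist_X$-ball around $p$ lies inside $\bar\Omega$, so any $X$-geodesic between two of its points stays in $\bar\Omega$; hence $\dist_X$ and $\dist_\Omega$ agree locally on $\Omega$, which yields $\Sigma_p\bar\Omega=\Sigma_pX$, and since $\bar\Omega\cap\mathcal{F}X=\varnothing$ the latter has no Alexandrov boundary. For the harder inclusion $\partial_{\rm top}\bar\Omega\subset\mathcal{F}\bar\Omega$, I would blow up at $p\in\partial_{\rm top}\bar\Omega$. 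The inclusion $(\bar\Omega,\dist_\Omega)\hookrightarrow(X,\dist_X)$ is $1$-Lipschitz, so in the pointed Gromov--Hausdorff limit it produces a $1$-Lipschitz map $\hat\iota:T_p\bar\Omega\to T_pX$. The central claim is that $\hat\iota$ is an isometric embedding with image a proper closed $N$-dimensional Alexandrov subcone: distance preservation is obtained by exhausting $T_p\bar\Omega$ with directions coming from sequences in $\Omega$ (dense in $\bar\Omega$ by the $N$-dimensionality of $\bar\Omega$ and the openness of $\Omega$ in $X$, where the two metrics agree locally), while properness of the image comes from rescaling sequences in $X\setminus\bar\Omega$ that accumulate at $p$. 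The proof is then closed by an auxiliary lemma — any proper closed $N$-dimensional Alexandrov subspace of an $N$-dimensional Alexandrov space without boundary has nonempty Alexandrov boundary — established by induction on $N$. This forces $\partial(T_p\bar\Omega)\neq\varnothing$, equivalently $\partial\Sigma_p\bar\Omega\neq\varnothing$, hence $p\in\mathcal{F}\bar\Omega$.

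Granted (1), part (2) is short: Petrunin's strong convexity of the Alexandrov interior applied to $\bar\Omega$ confines every $\dist_\Omega$-geodesic between two points of $\Omega=\bar\Omega\setminus\mathcal{F}\bar\Omega$ to $\Omega$, where the local coincidence $\dist_X=\dist_\Omega$ makes it locally $X$-minimizing. For part (3), one must still handle $\dist_\Omega$-geodesics that meet $\mathcal{F}\bar\Omega=\partial_{\rm top}\bar\Omega$. The strategy is to verify the Perelman--Petrunin characterization of quasi-geodesics in $X$: for every semiconcave $f$ on $X$, show that $f|_{\bar\Omega}$ is semiconcave with respect to $\dist_\Omega$ (using the local coincidence of metrics on $\Omega$ together with the tangent-cone identification from (1) to control the first variation of $f$ along directions entering $\mathcal{F}\bar\Omega$), which forces $f\circ\gamma$ to satisfy the required second-order barrier inequality along any $\dist_\Omega$-geodesic $\gamma$.

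The main obstacle is the tangent-cone identification in (1): upgrading the $1$-Lipschitz limit $\hat\iota$ to a genuine isometric embedding onto a proper, full-dimensional Alexandrov subcone of $T_pX$. Equality of dimensions and the hypothesis $\bar\Omega\cap\mathcal{F}X=\varnothing$ are precisely what should make this possible, and quantifying it — likely via a volume-comparison or a direct first-variation argument comparing $\dist_X$ and $\dist_\Omega$ on pairs from $\Omega$ — is the technical heart of the proof.
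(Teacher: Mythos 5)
Your plan for part (1) takes a genuinely different route from the paper, which derives $\partial_{\rm top}\bar\Omega\subset\mathcal F\bar\Omega$ directly from a topological Invariance of Domain theorem for MCS spaces applied to the inclusion $\bar\Omega\hookrightarrow X$: if $p\in\partial_{\rm top}\bar\Omega$ had a neighborhood $U\subset\bar\Omega$ disjoint from $\mathcal F\bar\Omega$, then $U$ would be open in $X$, forcing $p\in\Omega$. No blow-up is required. Your alternative — blowing up the inclusion at $p$ and reading off $\partial\Sigma_p\bar\Omega\neq\varnothing$ — has two real gaps that you acknowledge but do not close. First, the pointed Gromov--Hausdorff limit of a $1$-Lipschitz inclusion is only $1$-Lipschitz; upgrading $\hat\iota:T_p\bar\Omega\to T_pX$ to an isometric embedding requires showing $\dist_\Omega/\dist_X\to 1$ at the appropriate rescaled rate near $p$, and this is not automatic (it certainly uses the Alexandrov condition on $\bar\Omega$, but you give no mechanism). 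Second, and more seriously, the ``properness'' of the image is exactly the phenomenon the paper flags as a genuine difficulty: the complement $X\setminus\bar\Omega$ may in principle vanish in the blow-up (witness the cusp example $\{y<\sqrt{|x|}\}\subset\R^2$, which the Alexandrov hypothesis is supposed to rule out — but that needs an argument). Moreover, your auxiliary lemma (a proper closed full-dimensional Alexandrov subspace of a boundaryless Alexandrov space has nonempty boundary) is essentially Invariance of Domain in disguise; proving it ``by induction on $N$'' is not a shortcut, it is the whole theorem. So your route does not actually avoid the hard topological input — it reintroduces it at the tangent-cone level and adds the isometric-embedding and non-vanishing issues on top.

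Part (2) agrees with the paper: once $\partial_{\rm top}\bar\Omega=\mathcal F\bar\Omega$, Petrunin's strong convexity of the Alexandrov interior confines $\dist_\Omega$-geodesics between interior points to $\Omega$, where the two metrics coincide locally.

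For part (3) your proposal has a gap in a different place. Asserting that every semiconcave $f$ on $X$ restricts to a $\dist_\Omega$-semiconcave function on $\bar\Omega$ is a \emph{restatement} of the quasi-geodesic property, not a proof of it; you give no mechanism to control the second-order behavior of $f\circ\gamma$ when $\gamma$ runs along $\mathcal F\bar\Omega$, which is precisely the non-trivial case (recall the disk-glued-to-cylinder example, where the boundary circle is a $\dist_\Omega$-geodesic that is not an $X$-local-geodesic). The paper instead approximates any $\dist_\Omega$-geodesic $\gamma$ by $\dist_\Omega$-geodesics with endpoints in $\Omega$; by part (2) these are local $X$-geodesics, hence quasi-geodesics, and since limits of quasi-geodesics are quasi-geodesics, so is $\gamma$. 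You would need some argument of this flavor; the semiconcavity framing alone does not supply one.
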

Theorem \ref{thm:main1} will follow from the invariance of domain theorem for Alexandrov spaces, Theorem \ref{thm:inv}. {
The proof of Theorem \ref{thm:inv} has been worked out on the mathoverflow website quite a while ago by Belegradek, Petrunin and Ivanov but does not  seem to exist in literature. Since we have an application of this theorem we present the proof following closely the existing one by Belegradek-Petrunin-Ivanov.}  It works in a more general purely topological category of MCS spaces, see Theorem~\ref{thm:inv-dom-mcs}.

\begin{remark}\label{rem:counterex}
The assumption $\Omega={\rm Int}_{\rm top}(\bar \Omega)$ is clearly necessary and cannot be removed. For example, let $X=\R^n$, $\Omega=\R^n\setminus\{0\}$, which is open and dense. We see that $\bar\Omega=X$ is an Alexandrov space without Alexandrov boundary, but the topological boundary of is $\{0\}$ which is not empty. This shows that item \ref{main1:item1} does not hold without the assumption $\Omega={\rm Int}_{\rm top}(\bar \Omega)$  even for smooth manifolds. 

Next, the assumption that $\bar{\Omega}\cap\mathcal{F}X=\varnothing$ is also clearly necessary. For example let $X$ be the closed unit disk in $\R^2$ and $\Omega=X$. Then $\partial \bar \Omega$ is empty while $\mathcal{F}\bar\Omega=\mathbb S^1$.


 Also, a geodesic in $(\bar{\Omega},\dist_\Omega)$ joining two points on the boundary need not to be a local geodesic in $(X,\dist_X)$, so the conclusion in item \ref{main1:item3} of Theorem \ref{thm:main1} that a geodesic in $(\bar{\Omega},\dist_\Omega)$ is a quasi-geodesic in the ambient space is optimal.

  For example, consider the space $X\defeq D^2\times\{0\}\sqcup_{\mathbb{S}^1\times\{0\}}\mathbb{S}^1\times [0,\infty)$ with length metric, which is a cylinder glued along the boundary circle with a disk at the bottom, this is an Alexandrov space of non-negative curvature. Then let $\Omega=\mathbb{S}^1\times (0,\infty)$. Clearly $\Omega$ is open, however, any geodesic on $\partial_{\rm top}\Omega=\mathcal{F}\Omega=\mathbb{S}^1\times\{0\}$, which is an arc, is never a local geodesic w.r.t. the metric of $X$, since a segment in $D^2$ connecting any 2 points on its boundary circle is always shorter than the corresponding arcs.
This example also shows that $\bar{\Omega}$ need not to be locally convex. Compare this with Theorem \ref{thm:han}.

\end{remark}

For $\ncRCD(K,N)$ spaces, we are able to obtain a similar result to Theorem \ref{thm:main1} under an extra  assumption of a local Lipschitz condition on the metric $\dist_\Omega$ which serves as a weak substitute for the regularity of the topological boundary.
 \begin{theorem}\label{thm:main2}
     Let $(X,\dist,\haus^N_X)$ be a $\ncRCD(K,N)$ space, $\Omega$ be an open subset of $X$ such that $\Omega={\rm Int}_{\rm top}(\bar \Omega)$ and $\bar \Omega\cap \partial X=\varnothing$. Suppose that $(\bar{\Omega},\dist_\Omega,\haus^N_{\bar\Omega})$ is also an $\RCD(K,N)$ space and for every $x\in\partial_{\rm top}\bar{\Omega}$ there exist an neighborhood $U_x$ of $x$ and $C(U_x)>1$ such that $\dist_\Omega\le C(U_x) \dist_X$ when restricted to $U_x\cap\bar \Omega$. Then $\partial_{\rm top}\bar \Omega=\partial \bar\Omega$.
 \end{theorem}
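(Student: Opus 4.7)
The strategy mirrors Theorem \ref{thm:main1}: prove the two inclusions $\partial\bar\Omega\subset\partial_{\rm top}\bar\Omega$ and $\partial_{\rm top}\bar\Omega\subset\partial\bar\Omega$ separately, the latter being the substantive direction.

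For the easy inclusion $\partial\bar\Omega\subset\partial_{\rm top}\bar\Omega$, I would show that any $x\in\Omega$ satisfies $x\notin\partial\bar\Omega$. Because $\Omega$ is open in $X$, choose $r>0$ with $B_{2r}^{\dist_X}(x)\subset\Omega$. For $y,z\in B_{r/2}^{\dist_X}(x)$, a minimizing $X$-geodesic between them has length $<r$ and therefore stays in $B_{r}^{\dist_X}(x)\subset\Omega\subset\bar\Omega$, giving $\dist_\Omega(y,z)\le\dist_X(y,z)$. Combined with the trivial bound $\dist_\Omega\ge\dist_X$ on $\bar\Omega$, the two metrics, and hence the corresponding Hausdorff $N$-measures, agree on a small ball about $x$. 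Thus $(X,\dist_X,\haus^N_X)$ and $(\bar\Omega,\dist_\Omega,\haus^N_{\bar\Omega})$ are locally isometric as metric measure spaces at $x$. Since the De Philippis--Gigli boundary is defined via tangent cones and is therefore local, $x\in\partial\bar\Omega\iff x\in\partial X$; the assumption $\bar\Omega\cap\partial X=\varnothing$ then yields $x\notin\partial\bar\Omega$.

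For the main inclusion, suppose for contradiction that $x\in\partial_{\rm top}\bar\Omega$ but $x\notin\partial\bar\Omega$. The bi-Lipschitz condition $\dist_X\le\dist_\Omega\le C(U_x)\dist_X$ on $U_x\cap\bar\Omega$ implies that the inclusion $(\bar\Omega,\dist_\Omega)\hookrightarrow(X,\dist_X)$ is globally $1$-Lipschitz and locally $C$-biLipschitz at $x$. Choose a sequence $r_n\downarrow 0$ along which both rescaled pointed mm-spaces $(X,r_n^{-1}\dist_X,x,r_n^{-N}\haus^N_X)$ and $(\bar\Omega,r_n^{-1}\dist_\Omega,x,r_n^{-N}\haus^N_{\bar\Omega})$ converge in pmGH to tangent cones $T_xX$ and $T_x\bar\Omega$; an Arzelà--Ascoli argument applied to the rescaled inclusions produces a $(1,C)$-biLipschitz map $\iota_\infty:T_x\bar\Omega\to T_xX$ sending vertex to vertex. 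Using that $x\in\partial_{\rm top}\bar\Omega$, the open set $X\setminus\bar\Omega$ accumulates at $x$; by choosing blow-up scales $r_n$ so that $X\setminus\bar\Omega$ contains a ball of radius comparable to $r_n$ near $x$ (via a maximality argument on $\dist_X(\cdot,\bar\Omega)/\dist_X(\cdot,x)$, or by passing to a second blow-up centered at a sequence of maximal balls $B_{R_n}^X(z_n)\subset X\setminus\bar\Omega$ with $z_n\to x$ and $R_n\to 0$), one arranges that $\iota_\infty(T_x\bar\Omega)$ is a proper closed subset of $T_xX$.

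The main obstacle is then deriving a contradiction from the existence of a $(1,C)$-biLipschitz embedding of $T_x\bar\Omega$ onto a proper closed subset of $T_xX$, given that $x\notin\partial\bar\Omega$ and $x\notin\partial X$ force both tangent cones to be $\ncRCD(0,N)$ cones with no De Philippis--Gigli boundary at the apex. The cleanest route would invoke a topological regularity statement for the De Philippis--Gigli interior of a noncollapsed $\RCD(K,N)$ space --- namely, that near a non-boundary point the space is a topological $N$-manifold --- after which classical invariance of domain implies that the image of $\iota_\infty$ must be open, hence clopen in the connected cone $T_xX$, contradicting properness. This is the analog in the $\ncRCD$ setting of the role played by Theorem \ref{thm:inv} in the proof of Theorem \ref{thm:main1}; the scale-selection argument in the previous step and the availability of topological regularity at interior points of $\ncRCD$ spaces are the two technical hurdles that the proof must navigate.
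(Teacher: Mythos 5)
Your easy inclusion $\partial\bar\Omega\subset\partial_{\rm top}\bar\Omega$ is essentially the paper's Step~1 and is fine. For the main inclusion, however, your proposal leans on two ingredients that are not available and that the paper explicitly identifies as the central obstacles of the problem.

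First, you want a ``topological regularity statement for the De Philippis--Gigli interior of a noncollapsed $\RCD(K,N)$ space --- namely, that near a non-boundary point the space is a topological $N$-manifold.'' No such statement is known. The introduction stresses exactly this: unlike in the Alexandrov setting, a tangent cone of an $\ncRCD$ space carries no topological information about a neighborhood of the base point (Colding--Naber have examples with non-homeomorphic tangent cones at a single point). The paper circumvents this not by assuming regularity of $T_x\bar\Omega$ and $T_xX$ at an arbitrary $x\in\partial_{\rm top}\bar\Omega\setminus\partial\bar\Omega$, but by \emph{producing} a point $q\in\partial_{\rm top}\bar\Omega\setminus\partial\bar\Omega$ which is a regular point of the ambient space $X$. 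This is the content of Step~2 and is achieved by intersecting a geodesic entirely contained in $\mathcal R(X)$ (via Deng's pairwise almost convexity, Proposition~\ref{prop:pairconv}) with $\partial_{\rm top}\bar\Omega$. Once $q\in\mathcal R(X)$, the target of the blow-up is $T_qX\cong\R^N$, and no unproven topological regularity is needed on the ambient side.

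Second, your scale-selection argument for forcing $\iota_\infty(T_x\bar\Omega)$ to be a proper subset of $T_xX$ is exactly the ``cusp'' issue that the paper flags as the other main difficulty: for a cusp domain such as $\{y<\sqrt{|x|}\}\subset\R^2$ the ratio $\dist_X(z,\bar\Omega)/\dist_X(z,x)$ tends to $0$, so no blow-up scale makes the complement survive as a set of positive measure in the limit, and your maximality argument does not go through. The paper instead proves properness by a \emph{density} argument (Step~3 and~4): near the ambient-regular point $q$ one shows $\partial_{\rm top}\bar\Omega$ has codimension~$\ge 2$ (so $\haus^N_X$ and $\haus^N_{\bar\Omega}$ agree on $\dist_\Omega$-balls there) and, by an invariance-of-domain argument inside a manifold chart of $X$, that $\Theta_{\bar\Omega}(x)\le 1-\delta$ for all $x\in\partial_{\rm top}\bar\Omega$ near $q$. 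Since $\Theta_X(q)=1$ but $\Theta_{\bar\Omega}(q)\le 1-\delta$, a ball in $T_q\bar\Omega$ about the apex carries strictly less measure than the corresponding ball in $\R^N$, so the image of the blow-up cannot contain any Euclidean ball about $0$; this forces $0\in\partial_{\rm top}\, i_1(T_q\bar\Omega)$ with no need to inspect scales. Finally, the paper's Step~5 iterates the blow-up at successive topological-boundary points (each of which is again ambient-regular, since $T_qX=\R^N$, and again not in $\mathcal F\bar\Omega$, since $q\notin\partial\bar\Omega$), reducing in at most $N$ steps to a biLipschitz, non-surjective map $\R^N\to\R^N$ with $0$ on the boundary of its image, which contradicts classical invariance of domain. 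Your outline correctly anticipates that invariance of domain should be the final contradiction, but lacks the mechanism (ambient regularity of the blow-up point, density estimate, iterated blow-up) needed to land in a setting where it applies.
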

 Here, $\haus^N_X$ (resp. $\haus^N_{\bar\Omega})$) is the Hausdorff measure induced by $\dist_X$ (resp. $\dist_{\Omega}$). Notice the following relations between the two Hausdorff measures:
 
 \begin{remark}\label{rmk:equiHaus}
  From our assumption and the definition of intrinsic length metric, it follows that $\bar\Omega$ is embedded in $X$ in a locally biLipschitz way, i.e.\  for any $x\in \partial_{\rm top}\bar\Omega$ and its neighborhood $U_x$, $\dist_X\le \dist_{\Omega}\le C \dist_{X}$ when restricted to $U\cap\bar\Omega$, so the notions such as Hausdorff dimension and measure zero sets for both Hausdorff measures are equivalent for sets in $\bar\Omega$ {since we can always find a countable covering by neighborhoods on which the 2 metrics are biLipschitz to each other}.
  

 \end{remark}
 
There are 2 main technical difficulties in proving Theorem \ref{thm:main2}. The first being that in general there is no topological information on any neighborhood of a singular point. An important fact used to prove the invariance of domain theorem for Alexandrov spaces is that every point has a neighborhood homeomorphic to a cone over its space of directions, which is not available for $\ncRCD(K,N)$ spaces. In particular, as opposed to the situation in Alexandrov spaces, for a given point in an $\ncRCD(K,N)$ space  its tangent cone(s)in general  do not carry topological information of its neighborhood. For example,  Colding-Naber \cite{CN11} constructed an example of a noncollapsed Ricci limit space with a singular point at which there are two non-homeomorphic tangent cones. Another difficulty is that the topological boundary may  in principle vanish when taking tangent cones.  Conjecturally this cannot happen but this is unknown at the moment. 
A model case of this phenomenon would be a cusp, for example $X=\R^2$, and $\Omega=\{(x,y)\in \R^2: y<\sqrt{|x|}\}$, where $0\in \partial_{\rm top}\bar\Omega$ but its tangent cone in $\bar\Omega$ and in $X$ are both $\R^2$. We can quickly rule out this case since if $\bar\Omega$ were a $\ncRCD(K,N)$ space, then $0$ would have density $1$ in $\bar\Omega$, which in turn implies the neighborhood of $0$ $\Omega$ is a manifold, a contradiction. However, this argument does not work if the point on the topological boundary is itself a singular point of the ambient space. A unified way to overcome both difficulties is to find a regular point on the topological boundary, if it is more than De Philippis-Gigli boundary. Indeed, we are able to do this with the help of Deng's H\"older continuity of tangent cones along the interior of a geodesic, \cite{deng2020holder}.

A motivation for studying the extrinsic notion of boundary is provided by the following observation on manifolds. Han in \cite{han20} showed that for a weighted $n$-dimensional manifold $(M,g, e^{-f}\vol_g)$ with smooth boundary, the measure valued Ricci tensor 
\begin{equation}
    \mathrm{\bf Ric}(\nabla \phi,\nabla \phi):=\mathbf{\Delta}\frac{|\nabla \phi|^2}{2}-(\langle\nabla \phi,\nabla \Delta \phi\rangle+|\hess_{\phi}|^2)e^{-f}\vol_g
\end{equation}
 defined by Gigli \cite{Gigli14} can be expressed as
\begin{equation}
    \mathrm{\mathbf{Ric}}=(\mathrm{Ric}+\hess_f) e^{-f}\vol_g+ \mathrm{II}_{\partial M}e^{-f}\haus^{n-1}|_{\partial M},
\end{equation}
 where $\mathbf{\Delta}$ is the measure valued Laplacian. If $(M,g, e^{-f}\vol_g)$ satisfies $\CD(K,\infty)$ condition, then $\mathrm{\bf Ric}\ge K e^{-f}\vol_g$. Combined with Han's expression, this lower bound in particular implies that the second fundamental form is non-negative definite, which means the boundary is convex and it is well known that this implies that geodesics joining interior points do not intersect boundary. Han further interprets this convexity where a subset and its topological boundary are considered, moreover, the boundary is not $C^2$ so it is not possible to define the second fundamental form on it. 
 To proceed, we fix some notations. For a length metric space $(X,\dist)$, and an open connected subset $\Omega\subset X$, denote by $\dist_{\Omega}$ the intrinsic length metric on $\Omega$, it extends by continuity to $\bar\Omega$. Denote by $\partial_{\rm top}\bar\Omega$ the topological boundary of $\bar\Omega$ in $X$. More precisely, Han proved

\begin{theorem}[\cite{han20}]\label{thm:han}
Let $(M,g)$ be a complete $n$-dimensional manifold, and $\Omega\subset M$ be open. Suppose that $(\bar{\Omega}, \dist_\Omega, \meas)$ satisfies that $\supp(\meas)=\bar{\Omega}$ and $\CD(K,\infty)$ condition, then $\meas\mres \Omega\ll \vol_g\mres \Omega$, if furthermore $\bar\Omega$ has Lipschitz and $\haus^{n-1}$-a.e.\ $C^2$ boundary, then $\meas(\partial_{\rm top}\Omega)=0$ and $(\bar{\Omega},\dist_\Omega)$ is {locally convex}, i.e., every (minimizing) geodesic in $(\bar{\Omega},\dist_\Omega)$ is a local geodesic in $(M,g)$.
\end{theorem}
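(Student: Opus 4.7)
The plan is to handle the absolute continuity statement separately and then derive the vanishing of boundary mass together with local convexity by feeding the $\CD(K,\infty)$ condition into the measure-valued Ricci formula already stated above. For the first conclusion, fix $x\in\Omega$ and choose $r>0$ so small that $B\defeq B_{\dist_g}(x,r)\subset\Omega$ is a strongly convex geodesic ball of $(M,g)$. On $B$ the length metric $\dist_\Omega$ agrees with $\dist_g$ and $B$ is geodesically convex in $(\bar\Omega,\dist_\Omega)$; by McCann's theorem the corresponding $W_2$-geodesics also stay in $B$. The restriction property of $\CD(K,\infty)$ to $W_2$-convex subsets then yields that $(B,\dist_g,\meas\mres B)$ is $\CD(K,\infty)$; since $B$ is a smooth Riemannian manifold, any such reference measure must be absolutely continuous with respect to $\vol_g$ (a measure concentrated on a set of positive codimension would produce displacement interpolations with infinite entropy, contradicting the $K$-convexity of the relative entropy). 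Covering $\Omega$ by countably many such balls gives $\meas\mres\Omega\ll\vol_g\mres\Omega$, so we may write $\meas\mres\Omega=e^{-f}\vol_g\mres\Omega$.

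Now add the Lipschitz and $\haus^{n-1}$-a.e.\ $C^2$ boundary hypothesis. Since the Cheeger energy of $(\bar\Omega,\dist_\Omega,\meas)$ locally coincides with the smooth Dirichlet form of $(M,g)$, the space is infinitesimally Hilbertian, so $\CD(K,\infty)$ upgrades to $\RCD(K,\infty)$ and Gigli's criterion yields $\mathbf{Ric}\ge K\meas$ as measure-valued tensors. Writing $\sigma$ for the possible singular part of $\meas$ on $\partial_{\rm top}\bar\Omega$, Han's extension of the measure-valued Ricci formula produces a decomposition
\begin{equation*}
    \mathbf{Ric}
    = (\mathrm{Ric}+\hess_f)\,e^{-f}\vol_g\mres\Omega
    + \mathrm{II}_{\partial\bar\Omega}\,e^{-f}\haus^{n-1}\mres\partial\bar\Omega
    + R(\sigma),
\end{equation*}
where $R(\sigma)$ is the extra contribution coming from $\sigma$, supported on the boundary. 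Testing $\mathbf{Ric}\ge K\meas$ against gradients of smooth functions localized near a generic $C^2$ boundary point and comparing the singular parts of both sides simultaneously forces $\sigma=0$, hence $\meas(\partial_{\rm top}\bar\Omega)=0$, and $\mathrm{II}_{\partial\bar\Omega}\ge 0$ $\haus^{n-1}$-a.e. Once $\mathrm{II}\ge 0$ is known on the regular part of the boundary, a standard second-variation argument shows that every minimizing geodesic $\gamma$ of $(\bar\Omega,\dist_\Omega)$ is a local $(M,g)$-geodesic: on portions of $\gamma$ lying in $\Omega$ this is immediate since $\dist_\Omega=\dist_g$ locally, and on portions touching the boundary, convexity of the boundary rules out any length-shortening perturbation in $M$.

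The main obstacle is the middle step: making Gigli's abstract inequality $\mathbf{Ric}\ge K\meas$ interact rigorously with a boundary that is only $\haus^{n-1}$-a.e.\ $C^2$, and justifying the clean identification and comparison of singular parts on both sides so as to extract simultaneously the vanishing of $\sigma$ and the nonnegativity of $\mathrm{II}$. Carrying this out with only a Lipschitz global regularity assumption is precisely the technical heart of Han's paper \cite{han20}.
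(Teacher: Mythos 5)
This theorem is stated in the paper as a result of Han, cited to \cite{han20}; the paper itself supplies no proof of it (the only proofs in the paper concern Theorems~\ref{thm:main1}, \ref{thm:main2}, \ref{thm:inv}, \ref{thm:inv-dom-mcs}, Proposition~\ref{thm:almostconvex}, and the corollaries). So there is no internal argument to compare your sketch against, and any assessment has to be on its own merits as a reconstruction of Han's proof.

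As such a reconstruction, the outline is plausible but has real gaps beyond the one you explicitly flag. First, the absolute-continuity step is not rigorous: restricting $\CD(K,\infty)$ to a convex ball $B\subset\Omega$ is fine, but the assertion that a $\CD(K,\infty)$ measure with full support on a smooth chart must be absolutely continuous w.r.t.\ $\vol_g$ needs an actual argument; relative entropy in the $\CD$ condition is taken against $\meas$ itself, not against $\vol_g$, so ``singular parts give infinite entropy'' does not follow directly, and one must exhibit specific interpolations along which $K$-convexity fails if a singular part is present. Second, the upgrade to $\RCD(K,\infty)$ via infinitesimal Hilbertianity is potentially circular: the Cheeger energy of $(\bar\Omega,\dist_\Omega,\meas)$ is computed against $\meas$ on all of $\bar\Omega$, so identifying it with the weighted Dirichlet form on $\Omega$ tacitly uses $\meas(\partial_{\rm top}\bar\Omega)=0$, which is one of the conclusions you are still trying to prove at that point. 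Third, the decomposition $\mathbf{Ric}=(\mathrm{Ric}+\hess_f)e^{-f}\vol_g+\mathrm{II}\,e^{-f}\haus^{n-1}+R(\sigma)$ is postulated rather than derived, and comparing singular parts ``simultaneously'' to get $\sigma=0$ and $\mathrm{II}\ge 0$ is precisely the step you defer entirely to Han; as written this is a restatement of the goal, not an argument. Finally, the last step (from $\mathrm{II}\ge 0$ $\haus^{n-1}$-a.e.\ to the local-geodesic conclusion) also needs care: a second-variation argument at the $C^2$ boundary points does not by itself rule out shortening deformations passing through the $\haus^{n-1}$-null set where the boundary is merely Lipschitz, and one has to use the Lipschitz regularity together with an approximation/limiting argument there. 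In short, the skeleton matches what Han's result must do, but all three nontrivial steps (absolute continuity, the measure-valued Ricci decomposition with singular part, and the passage from a.e.\ convexity to local geodesy) are left as black boxes and cannot be counted as a proof.
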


In particular, every minimizing geodesic in $(\bar{\Omega},\dist_\Omega)$ joining $2$ points in $\Omega$ does not intersect ${\partial_{\rm top}\Omega}$. We would like to generalize to non-smooth setting the {above theorem of Han, but in view of Remark \ref{rem:counterex}, it is not true that the (synthetic) Ricci curvature lower bound on a closed subset forces the set to be locally convex. The correct notion to consider for metric spaces is the locally totally geodesic property. 

\begin{definition}
	Let $(X,\dist)$ be a geodesic metric space. A { connected open subset $\Omega$} is said to be \emph{locally totally geodesic} if every (minimizing) geodesic {in $(\bar \Omega, \dist_{\Omega})$} joining two points in $\Omega$ is a local geodesic in $(X,\dist)$.
\end{definition}
} 


{With this notion, we see from item \ref{main1:item2} of Theorem \ref{thm:main1} that we have shown that the synthetic sectional curvature lower bound on the closure of an open subset forces this open subset to be locally totally geodesic.

 For $\ncRCD$ spaces, the natural approach to generalize the fact that Ricci curvature lower bound on a subset forces locally totally geodesic property is  to show the equivalence between the intrinsic and topological boundary, since the convexity results for intrinsic boundary will then apply to the topological boundary as well. For example, with extra assumption that Kapovitch-Mondino boundary and De Philippis-Gigli boundary coincide, we can derive that the interior of an $\ncRCD(K,N)$ subspace is locally totally geodesic by combining Theorem \ref{thm:main2} and Theorem \ref{thm:intconv}. See also Corollary \ref{cor:loc-total-geo}. }   

 However, for $\ncRCD(K,N)$ spaces, the strong convexity of its (intrinsic) interior is not presently known, to derive it we need an extra assumption that the Kapovitch-Mondino boundary and the De Philippis-Gigli boundary are the same.

\begin{theorem}[Corollary \ref{cor:intconv}]\label{thm:intconv}
 Let $(X,\dist,\haus^N)$ be a $\ncRCD(K,N)$ space. Assume $\partial X=\mathcal{F} X$, then ${\rm Int}(X)\defeq X\setminus \partial X$ is strongly convex, i.e.\ any geodesic joining points in ${\rm Int}(X)$ does not intersect $\partial X$.
\end{theorem}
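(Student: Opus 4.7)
The plan is to invoke the metric-measure doubling of $X$ along its boundary, combined with essential non-branching of $\ncRCD(K,N)$ spaces. Under the hypothesis $\partial X=\mathcal{F}X$, I would form the doubled space $\tilde X\defeq X_+\sqcup_{\partial X}X_-$, equipped with the natural length metric $\tilde\dist$, the pushed-forward measure, and the isometric involution $\sigma$ exchanging the two copies of $X$. The first step is to establish that $(\tilde X,\tilde\dist,\haus^N_{\tilde X})$ is itself an $\ncRCD(K,N)$ space with empty boundary and that each inclusion $X_\pm\hookrightarrow\tilde X$ is locally isometric, so that a minimizing geodesic of $X$ between points of $X\setminus\partial X$ is still minimizing in $\tilde X$.

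Assume for contradiction that $\gamma\colon[0,1]\to X$ is a minimizing geodesic with $\gamma(0),\gamma(1)\in X\setminus\partial X$ but $\gamma(t_0)\in\partial X$ for some $t_0\in(0,1)$. Consider the reflected curve
\[
\tilde\gamma(t)\defeq\begin{cases}\gamma(t),&t\le t_0,\\\sigma(\gamma(t)),&t\ge t_0.\end{cases}
\]
Since $\sigma$ fixes $\partial X$ pointwise and is an isometry of $\tilde X$, $\tilde\gamma$ has the same length as $\gamma$. Thus both $\gamma$ and $\tilde\gamma$ are minimizing geodesics in $\tilde X$, starting at $\gamma(0)$, coinciding on $[0,t_0]$, and ending at the two distinct points $\gamma(1)\in X_+\setminus\partial X$ and $\sigma(\gamma(1))\in X_-\setminus\partial X$; they therefore genuinely branch at $\gamma(t_0)$.

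To rule out this branching I would apply the essential non-branching property of $\ncRCD(K,N)$ spaces to a small family of perturbations of the endpoints of $\gamma$: varying the endpoints slightly in the open sets $X_+\setminus\partial X$ and $X_-\setminus\partial X$ produces positive-measure families of minimizing plans in $\tilde X$ (one from $\gamma(0)$ toward a neighborhood of $\gamma(1)$, one toward a neighborhood of $\sigma(\gamma(1))$) that share an initial segment near $\gamma(t_0)$, contradicting essential non-branching. This is the $\ncRCD$ analog of Perelman's proof of strong convexity of the Alexandrov interior via doubling.

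The main obstacle is the first step, namely the doubling statement itself. In the Alexandrov category, Perelman's theorem guarantees that the double is Alexandrov; in the $\ncRCD$ framework, a corresponding doubling result is available but it requires that the boundary one glues along be sufficiently well-behaved, and precisely the hypothesis $\partial X=\mathcal{F}X$ is what allows one to invoke it (since it identifies the stratification-theoretic De Philippis--Gigli boundary with the iterated-tangent-cone Kapovitch--Mondino boundary, providing the geometric regularity needed for the gluing to remain $\ncRCD$). Once the doubling and the reflection argument are in place, the branching contradiction finishes the proof.
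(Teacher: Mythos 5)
Your strategy — double $X$ along $\partial X$, show the double is $\ncRCD(K,N)$, and then derive a branching geodesic contradicting essential non-branching — is conceptually appealing, but it hinges on a gluing/doubling theorem for $\ncRCD$ spaces that is not available in the literature. You write that ``a corresponding doubling result is available'' under the hypothesis $\partial X=\mathcal{F}X$, but this is precisely what is missing: unlike the Alexandrov category, where Perelman's doubling theorem is a hard foundational result, there is at present no theorem asserting that the metric-measure double of a $\ncRCD(K,N)$ space along its De Philippis--Gigli boundary is again $\ncRCD(K,N)$, with or without the assumption $\partial X=\mathcal{F}X$. Indeed, strong convexity of the interior is exactly the kind of structural information one would need as \emph{input} to such a gluing theorem, so the appeal risks being circular. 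A second, lesser, gap: even granting the doubling, essential non-branching is a measure-theoretic statement about optimal geodesic plans, so exhibiting one branching geodesic is not a contradiction; your perturbation sketch does not control whether nearby minimizers still pass through $\partial X$ (they need not), so producing a positive-measure branching family would require substantially more care.

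The paper's argument is much more elementary and sidesteps all of this. Take the \emph{first} time $t_0\in(0,1)$ at which $\gamma$ touches $\partial X$ (well defined since $\partial X$ is closed and $\gamma(0)\notin\partial X$). For every $t\in(0,t_0)$ the point $\gamma(t)$ lies in $\mathrm{Int}(X)$, so its tangent cones have no boundary. Deng's H\"older continuity of tangent cones along the interior of a geodesic, combined with the stability of the no-boundary condition under pmGH convergence (BNS20, Theorem~1.6), forces the tangent cones at the interior point $\gamma(t_0)$ to have no boundary as well. But $\gamma(t_0)\in\partial X=\mathcal{F}X$ means, by definition of the Kapovitch--Mondino boundary, that some tangent cone at $\gamma(t_0)$ is a cone over a space \emph{with} boundary — contradiction. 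This only uses results already cited in the paper and requires no gluing construction. I'd encourage you to rework your proof along these lines, or, if you wish to pursue the doubling route, to first locate (or prove) a genuine gluing theorem for $\ncRCD$ spaces; without it the proposal does not close.
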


  Although the equivalence between the two boundary notions, hence the strong convexity of the interior of $\ncRCD(K,N)$ space is unknown, we can still obtain an a.e.\ version of convexity of the interior of a $\ncRCD(K,N)$ space. This in turn implies that  for a $\ncRCD(K,N)$ subset, intrinsic geodesics joining most interior points are away from its topological boundary. The a.e.\ convexity of interior follows from the following more general a.e.\ convexity of regular set at essential dimension which is a slight generalization of pairwise  a.e. convexity of $\mathcal{R}_n$ proved by Deng \cite[Theorem 6.5]{deng2020holder}.
  
\begin{proposition}\label{thm:almostconvex}
Let $(X,\dist,\meas)$ be an $\RCD(K,N)$ space of essential dimension $n$. For \textit{every} $x\in X$, there exists a subset $R_x\subset \mathcal{R}_n$ so that $\meas(X\setminus R_x)=0$ and for any $y\in R_x$ there is a minimizing geodesic joining $x,y$ contained in $\mathcal{R}_n$ except possibly for $x$.  
\end{proposition}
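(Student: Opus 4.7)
Fix $x\in X$. The strategy is to use an $L^2$-optimal transport from $\delta_x$ to an absolutely continuous reference measure and combine it with Deng's H\"older continuity of tangent cones to produce, for $\meas$-a.e.\ target $y$, a minimizing geodesic from $x$ to $y$ whose restriction to $(0,1]$ is contained in $\mathcal{R}_n$.

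Choose $\mu=\eta\,\meas\in\Prob(X)$ with $\eta>0$ $\meas$-a.e.\ on $X$ and finite second moment. Since every $\RCD(K,N)$ space satisfies the measure contraction property $\mathrm{MCP}(K,N)$, there exists an optimal dynamical plan $\ppi\in\Prob(\mathrm{Geo}(X))$ from $\delta_x$ to $\mu$ whose intermediate marginals $\mu_t\defeq(e_t)_\#\ppi$ are absolutely continuous with respect to $\meas$ for every $t\in(0,1]$, with a priori density bounds depending only on $K$, $N$ and $\mu$. Because $\meas(X\setminus\mathcal{R}_n)=0$, this forces $\mu_t(\mathcal{R}_n)=1$, equivalently $\ppi(\{\gamma:\gamma(t)\in\mathcal{R}_n\})=1$, for every $t\in(0,1]$. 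Applying Fubini to $\ppi\otimes\leb^1\mres(0,1)$ gives that for $\ppi$-a.e.\ $\gamma$ the slice $\{t\in(0,1):\gamma(t)\in\mathcal{R}_n\}$ has full Lebesgue measure, and in particular is nonempty.

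Now invoke the main consequence of Deng's H\"older continuity of tangent cones along the interior of a minimizing geodesic \cite{deng2020holder}: if a minimizing geodesic $\gamma$ in an $\RCD(K,N)$ space of essential dimension $n$ satisfies $\gamma(t_0)\in\mathcal{R}_n$ for some $t_0\in(0,1)$, then $\gamma((0,1))\subset\mathcal{R}_n$. Together with $\gamma(1)\in\mathcal{R}_n$ (which follows from $\mu_1=\mu\ll\meas$), this shows that $\gamma((0,1])\subset\mathcal{R}_n$ for $\ppi$-a.e.\ $\gamma$. Define
$$R_x\defeq\bigl\{y\in\mathcal{R}_n:\exists\,\gamma\in\mathrm{Geo}(X),\ \gamma(0)=x,\ \gamma(1)=y,\ \gamma((0,1])\subset\mathcal{R}_n\bigr\}.$$
The evaluation $e_1$ sends the $\ppi$-full measure set of ``good'' geodesics into $R_x$, so $\mu(X\setminus R_x)=0$, and since $\mu$ is equivalent to $\meas$, we conclude $\meas(X\setminus R_x)=0$.

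\textbf{Main obstacle.} The decisive step is the regularity propagation: having one interior point in $\mathcal{R}_n$ forces the whole open interior of the geodesic to lie in $\mathcal{R}_n$. This is exactly what Deng's H\"older continuity of tangent cones delivers, and it is the ingredient that upgrades the $\meas\otimes\meas$-a.e.\ version (Deng's Theorem 6.5) to the ``for every $x$'' statement required here. A secondary technical point is the absolute continuity of the intermediate marginals $\mu_t$ when the source is the Dirac $\delta_x$, which is supplied by $\mathrm{MCP}(K,N)$.
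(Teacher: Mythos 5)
Your proof is correct in substance but takes a genuinely different route from the paper's. The paper runs the argument through $1$-D localization (needle decomposition) with respect to the $1$-Lipschitz function $\dist(x,\cdot)$, using the disintegration $\meas=\int_Q\meas_\alpha\,\mathfrak{q}(\dd\alpha)$ (Theorem \ref{thm:disint}): almost every needle $X_\alpha$ emanating from $x$ satisfies $\meas_\alpha(X\setminus\mathcal{R}_n)=0$, and positivity of the conditional density $h_\alpha$ ($\log$-concave, hence $\haus^1$-a.e.\ positive on $X_\alpha$) upgrades this to $\haus^1$-density of $\mathcal{R}_n$ on the needle, after which Proposition \ref{prop:dentofull} takes over. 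You instead push a single $L^2$-optimal dynamical plan $\ppi$ from $\delta_x$ to a reference $\mu=\eta\meas$, use the $\mathrm{MCP}(K,N)$ a priori density bounds to get $(e_t)_\#\ppi\ll\meas$ for $t\in(0,1]$, and then apply Fubini to see that $\ppi$-a.e.\ geodesic spends Lebesgue-a.e.\ time in $\mathcal{R}_n$. Both are standard machines and both produce, for $\meas$-a.e.\ endpoint $y$, a geodesic from $x$ to $y$ which meets $\mathcal{R}_n$ on a dense subset of its interior. Your route has the advantage of being conceptually lighter (one optimal plan instead of the disintegration machinery) but trades explicit needles for a measurable-selection-style definition of $R_x$; the needle version gives $R_x$ as a concrete Borel set.

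One point deserves a caveat. You invoke a propagation statement in the form ``if a minimizing geodesic $\gamma$ has $\gamma(t_0)\in\mathcal{R}_n$ for \emph{some} interior $t_0$, then $\gamma((0,1))\subset\mathcal{R}_n$'' and attribute it to Deng's H\"older continuity of tangent cones. That single-point implication would require $\gamma\cap\mathcal{R}_n$ to be \emph{open} in the interior of $\gamma$ (and then conclude by connectedness), whereas what is actually established (Proposition \ref{prop:dentofull}, after Colding--Naber and Deng) is only \emph{closedness} of $\gamma\cap\mathcal{R}_n$ in the interior: H\"older continuity lets you pass the property ``tangent cone $=\R^n$'' to limits along the geodesic, but being GH-close to $\R^n$ does not force a tangent cone to \emph{be} $\R^n$, so openness does not follow. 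This is harmless for your argument, because your Fubini step gives much more than a single time: it gives a full-Lebesgue-measure (in particular dense) set of times in $\mathcal{R}_n$, and density plus the established closedness is exactly the hypothesis of Proposition \ref{prop:dentofull}, which then yields $\gamma((0,1))\subset\mathcal{R}_n$. So replace the single-point propagation claim with the density-plus-closedness statement and the proof is complete.
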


For the proof we need the technique of localization via transport rays of any $1$-Lipschitz function, developed by Cavalletti-Mondino \cite{CavMon15} in non-smooth setting. 

Finally, we conjecture that Theorem \ref{thm:han} holds in much larger generality including the measure regularity part, see Conjecture \ref{conj:collapseconv}.

The paper is organized as follows: In section \ref{sec:prelim}, we recall concisely the structure results for Alexandrov and $\RCD(K,N)$ spaces. In section \ref{sec:inv} we prove invariance of domain theorem for Alexandrov spaces. Section \ref{sec:equi} is devoted to the proof of main theorems Theorem \ref{thm:main1} and Theorem \ref{thm:main2}. The last two sections, section \ref{sec:AppConv} and \ref{sec:almost} focus on applications of the main theorems to subsets satisfying $\ncRCD(K,N)$ condition in various ambient spaces.

\smallskip\noindent
\textbf{Acknowledgement.} The second named author thanks Anton Petrunin for bringing invariance of domain for Alexandrov spaces to his attention, Qin Deng for suggesting Proposition \ref{thm:almostconvex}, Igor Belegradek and Jikang Wang for several helpful discussions. 

\section{Preliminary}\label{sec:prelim}

\subsection{Stratified spaces}\label{subsec:} 
In this section we give a brief review of topological stratified spaces. 

\begin{definition}
A metrizable space $X$ is called an \emph{MCS-space (space with multiple conic singularities)} of dimension $n$ if every point $x\in X$ has a neighborhood pointed homeomorphic to the open cone over a compact  $(n-1)$-dimensional MCS space. Here we assume the empty set to be the unique $(-1)$-dimensional MCS-space.
\end{definition}

\begin{remark}
A compact $0$-dimensional MCS-space is a finite collection of points with discrete topology. A 1-dimensional MCS-space is a locally finite graph.
\end{remark}

An open conical neighborhood of a point in an MCS-space is unique up to pointed homeomorphism~\cite{Kwun}. However given an open conical neighborhood $U$ of $x\in X$ pointed homeomorphic to a cone over an $(n-1)$-dimensional space $\Sigma_x$, the space $\Sigma_x$  need not be uniquely determined by $U$.

It easily follows from the definition that an MCS space has a natural topological stratification constructed as follows.

We say that a point $p\in X$ belongs to the $l$-dimensional stratum $X_l$ if $l$ is the maximal number $m$ such that the conical neighbourhood 
of $p$ is pointed homeomorphic to $\R^m\times K(S)$ for some MCS-space $S$. It is clear that $X_l$ is an $l$-dimensional topological manifold. It is also immediate that for $x\in X_l$ all points in the conical neighborhood of $X$ belong to the union of $X_k$ with $k\ge l$. Therefore the closure $\bar X_l$ of the $l$-stratum is contained in the union $\cup_{m\le l} X_m$ of the strata of dimension at most $l$.

The $n$ stratum $X_n$ is an $n$-dimensional manifold and by above it is open and dense in $X$. We will also refer to $X_n$ as the \emph{top stratum} of $X$.

\subsection{Structure theory for $\RCD(K,N)$ spaces}\label{subsec:ncRCD}
When writing $\RCD(K,N)$ space, we always assume that $N\in [1,\infty)$. 
We assume familiarity with the structure theory of $\RCD(K,N)$ spaces and just collect a few facts to fix notations. 

\begin{definition}
    Given an $\RCD(K,N)$ space $(X,\dist,\meas)$, let $\mathcal{R}_k$ be the set of points at which the tangent cone is $(\R^k,|\cdot|,\leb^k)$, for $k\in [1,N]\cap \setN$. $\mathcal{R}(X)\defeq \cup_k \mathcal{R}_k$ is called the regular set of $X$. 
\end{definition}

If there is no confusion we also write $\mathcal{R}$ instead of $\mathcal{R}(X)$. It is shown in \cite{MN14} that $\meas(X\setminus\cup_{k}\mathcal{R}_k)=0$ and each $\mathcal{R}_k$ is $\haus^k$-rectifiable. Then it is shown in \cite{BrueSemola20Constancy} that there is a unique $n\in [1,N]\cap \setN$ such that $\meas(X\setminus\mathcal{R}_n)=0$. Such $n$ is called the essential dimension of $(X,\dist,\meas)$ which is also denoted by ${\rm essdim}$. 
{It is equal to the maximal $k$ such that $\mathcal{R}_k$ is non empty, see for example \cite{kitabeppu2017sufficient}.}
The singular set $\mathcal{S}$ is the complement of the regular set, $\mathcal{S}\defeq X\setminus \cup_{k}\mathcal{R}_k$.  { The singular set has measure zero.}

The notion of noncollapsed $\RCD(K,N)$ ($\ncRCD(K,N)$ in short) is proposed in \cite{DPG17}, requiring that $\meas=\haus^N$, which in turn implies $N\in\setN$ and the essential dimension of a $\ncRCD(K,N)$ space is exactly $N$, see \cite[Theorem 1.12]{DPG17}. When considering $\ncRCD(K,N)$ spaces, finer structure results are available.

The density function
\begin{equation}
    \Theta_N(x)\defeq\lim_{r\to 0}\frac{\haus^N(B_r(x))}{\omega_N r^N}\le 1
\end{equation}
plays a crucial role in the study of regularity of $\ncRCD(K,N)$ spaces. The existence of the  limit and the  upper bound $1$ come from the Bishop-Gromov inequality. Note that the density function characterizes the regular points in the following way \cite[Corollary 1.7]{DPG17}:
\begin{equation}
    \Theta_N(x)=1 \Leftrightarrow x\in \mathcal{R}_N=\mathcal{R}.
\end{equation}

  Thanks to the splitting theorem \cite{Gigli13} and the volume cone to metric cone property \cite{DPG16} in a $\ncRCD(K,N)$ space, the singular set $\mathcal{S}$ is stratified into 
\[
\mathcal{S}_0\subset \mathcal{S}_1\subset \cdots\subset \mathcal{S}_{N-1},
\]
where for $0\le k\le N-1$, $k\in \N$, $\mathcal{S}_k=\{x\in \mathcal{S}: \text{no tangent cone at $x$ is isometric to } \R^{k+1}\times C(Z)\text{ for any metric space } Z\}$, where $C(Z)$ is the metric measure cone over a metric space $Z$. It is proved in \cite[Theorem 1.8]{DPG17} that
\begin{equation}\label{eq:sing}
    \dim_{\haus}(\mathcal{S}_k)\le k.
\end{equation}
 With the help of the metric Reifenberg theorem \cite[Theorem A.1.1-A.1.3]{Cheeger-Colding97I}, it can be derived that for points whose the density is close to $1$ there is a neighborhood homeomorphic to a smooth manifold. We have from \cite[Theorem 1.7, Corollary 2.14]{KapMon19} that

\begin{theorem}\label{thm:regular}
    Let $(X,\dist,\meas)$ be a $\ncRCD(K,N)$ space, and $\alpha\in (0,1)$. There exists $\delta\defeq \delta(\alpha,K,N)>0$ small enough so that if $x\in X$ satisfies $\Theta_N(x)> 1-\delta$, then there is a neighborhood of $x$ biH\"older homeomorphic to a smooth manifold with H\"older exponent $\alpha$. Moreover the set $\{x\in X: \Theta_N(x)> 1-\delta\}$ is open and dense.
\end{theorem}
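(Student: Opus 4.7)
The plan is to follow the Cheeger--Colding strategy, as adapted to the $\RCD$ setting by De Philippis--Gigli and Kapovitch--Mondino. The backbone is: density close to $1$ forces volume pinching at every smaller scale (via Bishop--Gromov), which forces Gromov--Hausdorff closeness to a Euclidean ball at every scale (via volume cone $\Rightarrow$ metric cone), which via the metric Reifenberg theorem yields a bi-H\"older chart.

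First, fix $x$ with $\Theta_N(x)>1-\delta$, $\delta$ to be chosen. Write $v_{K,N}(r)$ for the volume of the $r$-ball in the $N$-dimensional model of constant curvature $K/(N-1)$, and recall Bishop--Gromov: $r\mapsto \haus^N(B_r(x))/v_{K,N}(r)$ is non-increasing, with limit $\Theta_N(x)$ as $r\to 0$. Consequently, for every $r\le r_0(K,N)$ small the ratio $\haus^N(B_r(x))/(\omega_N r^N)$ lies in $(1-2\delta,\,1)$. By the quantitative almost volume cone implies almost metric cone theorem available in the $\RCD$ category (\cite{DPG16}, building on Cheeger--Colding), this volume pinching forces $(B_r(x),r^{-1}\dist)$ to be $\eps(\delta|K,N)$-close in pointed Gromov--Hausdorff distance to the unit ball in some $N$-dimensional metric cone $C(Z)$ with $\Theta_N$ at the apex equal to the density of $x$. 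Iterating at all small scales, together with the rigidity that an $\ncRCD(0,N)$ cone with apex density $1$ is $\R^N$, one concludes that $B_r(x)$ is $\eps$-close to a Euclidean ball at every scale below $r$, with $\eps=\eps(\delta|K,N)\to 0$ as $\delta\to 0$.

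Next, feed this multi-scale Euclidean closeness into the metric Reifenberg theorem of Cheeger--Colding (\cite[Theorem A.1.1--A.1.3]{Cheeger-Colding97I}); for $\delta$ small enough in terms of $\alpha$, it provides a bi-H\"older homeomorphism with exponent $\alpha$ between a neighbourhood of $x$ and a ball in $\R^N$. This neighbourhood is then, in particular, a topological manifold, giving the first conclusion.

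For the final clause (openness and density of $\{\Theta_N>1-\delta\}$), openness follows directly from Bishop--Gromov and the continuity of $r\mapsto\haus^N(B_r(x_0))$: pick $r$ small with $\haus^N(B_r(x_0))/v_{K,N}(r)>1-\delta/2$, then for $y$ close to $x_0$ the inclusion $B_{r-\dist(x_0,y)}(x_0)\subset B_r(y)$ and monotonicity give
\begin{equation*}
\Theta_N(y)\ge \frac{\haus^N(B_r(y))}{v_{K,N}(r)}\ge \frac{\haus^N(B_{r-\dist(x_0,y)}(x_0))}{v_{K,N}(r)}>1-\delta,
\end{equation*}
once $\dist(x_0,y)$ is sufficiently small. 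Density is immediate from the structure theory recalled above: the essential dimension of an $\ncRCD(K,N)$ space is $N$ \cite{DPG17}, so $\haus^N(X\setminus\mathcal{R}_N)=0$, and since $\haus^N$ has full support $\mathcal{R}_N$ is dense; on $\mathcal{R}_N$ one has $\Theta_N\equiv 1$, so $\mathcal{R}_N\subset\{\Theta_N>1-\delta\}$.

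The most delicate step is the second one, the almost volume cone $\Rightarrow$ almost metric cone implication in the non-smooth $\RCD$ setting, since it is exactly here that the synthetic Ricci bound does the essential work of forcing a Euclidean model once the volume deficit is small. The rest of the argument is essentially soft, combining Bishop--Gromov monotonicity with the topological metric Reifenberg machinery.
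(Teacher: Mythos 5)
The paper does not prove this theorem; it is quoted verbatim from Kapovitch--Mondino \cite[Theorem~1.7, Corollary~2.14]{KapMon19}, and the cited proof is precisely the Bishop--Gromov $\Rightarrow$ volume pinching $\Rightarrow$ almost volume cone $\Rightarrow$ almost metric cone $\Rightarrow$ Reifenberg chain you lay out, together with the same soft arguments for openness and density. So your sketch is a faithful reconstruction of the reference's proof rather than an alternative.

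One small point to tighten: where you invoke ``the rigidity that an $\ncRCD(0,N)$ cone with apex density $1$ is $\R^N$,'' what is actually needed is the quantitative version of this statement, namely that a cone whose apex density is merely \emph{close} to $1$ must be pointed-GH close to $\R^N$. This follows from the exact rigidity plus a compactness/contradiction argument (using precompactness of $\RCD(0,N)$ cones, volume convergence in the noncollapsed regime, and continuity of the apex density along the convergent sequence), but as literally written the inference from ``close to $1$'' to ``is $\R^N$'' skips this bridge. A second, cosmetic nit: for $K<0$ the ratio $\haus^N(B_r(x))/(\omega_N r^N)$ need not be bounded above by $1$; the correct monotone quantity is $\haus^N(B_r(x))/v_{K,N}(r)$, and the transfer to $\omega_N r^N$ only gives an upper bound of the form $1+o(1)$ as $r\to 0$. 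Neither issue affects the soundness of the overall strategy.
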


We call such points manifold points, and call the complement non-manifold points. It then follows that the set of non-manifold points has Hausdorff codimension at least $1$ since it is contained in $S^{N-1}$. 
 
 Finally let us recall here some facts about the boundary of a $\ncRCD$ space $(X,\dist,\haus^N)$. Based on the stratification of $\mathcal{S}$, De Philippis and Gigli proposed the following definition of the boundary of a $\ncRCD(K,N)$ space $(X,\dist,\meas)$:
\begin{equation}\label{eq:DPGboundary}
    \partial X\defeq \overline{\mathcal{S}_{n-1}\setminus \mathcal{S}_{n-2}}.
\end{equation}
On the other hand, Kapovitch-Mondino (\cite{KapMon19}) proposed another recursive definition of the boundary analogous to that of Alexandrov spaces, for $N\ge 2$:  
\begin{equation}\label{eq:KMboundary}
    \mathcal{F}X\defeq\{x\in X:  \exists Y\in {\rm Tan}(X,\dist,\meas,x), Y=C(Z), \mathcal{F}Z\neq \varnothing\}.
\end{equation}
In this definition $Z$ must be a non-collapsed $\RCD(N-2,N-1)$ space with suitable metric and measure (\cite[Lemma 4.1]{KapMon19}, after \cite{Ketterer2015}), so one can inductively reduce the consideration to the case $N=1$, in which case the classification is completed in \cite{KL15}.

The measure theoretical and topological structure of De Philippis-Gigli's boundary is subsequently studied in \cite{BNS20} and \cite{BPS21}.  We will need the following relation from combining \cite[Lemma 4.6]{KapMon19} and \cite[Theorem 6.6]{BNS20}: 
\begin{equation}\label{eq:boundaryrelation}
    \mathcal{S}^{N-1}\setminus\mathcal{S}^{N-2}\subset\mathcal{F}X\subset \partial X.
\end{equation}
 An implication of the above relation is that not having boundary in both senses are the same, {  and is equivalent to $ \mathcal{S}^{N-1}\setminus\mathcal{S}^{N-2}=\varnothing$.} It is conjectured that $\mathcal{F}X=\partial X$, and this is verified for Alexandrov spaces and Ricci limit spaces with boundary, see \cite[Chapter 7]{BNS20}.

 \subsection{Structure theory of Alexandrov Spaces}\label{subsec:Alexandrov}
Observe that the structure theory of $\ncRCD(K,N)$ spaces holds for Alexandrov spaces since $N$-dimensional Alexandrov spaces with lower curvature bound $K$ are $\ncRCD(K,N)$ spaces \cite{Petrunin11}, though some results can have different, usually easier, proofs. Instead of attempting to give a thorough introduction, we collect here the following facts that are necessary for this paper and are more refined than that of $\ncRCD(K,N)$ spaces. We refer readers to \cite{BGP92, BBI01, Petr-conv} for detailed structure theory of Alexandrov spaces. 

Fix an $N$-dimensional Alexandrov space $(X,\dist)$. We describe the  tangent cones, boundary and topological structure of $X$.
 
Tangent cones in an Alexandrov space are nicer than those in $\ncRCD$ spaces, for example, the tangent cone at every point is unique. To better describe tangent cones, we introduce the space of directions:

\begin{definition}
For any $p\in X$, we say that any $2$ geodesics emanating from $p$ have the same direction if their angle at $p$ is zero. This induces an equivalence relation on the space of all geodesics emanating from $p$ and the angle induces a metric on the space of equivalent classes of such geodesics. The metric completion of it is the space of directions at $p$, denoted by $\Sigma_p(X)$.
\end{definition} 

$\Sigma_p(X)$ is an $(N-1)$-dimensional Alexandrov space of curvature lower bound $1$ \cite[Theorem 10.8.6]{BBI01}. The (metric) tangent cone at $p$ is the metric cone over $\Sigma_p(X)$, this definition is consistent with the (blow-up) tangent cone $T_pX$ obtained by taking the pGH limit of $(X, r^{-1}\dist,p)$ as $r\to 0$. This observation along with Perelman's stability theorem \cite{Perelman91} implies that $p$ has a neighborhood homeomorphic to a cone over $\Sigma_p(X)$, therefore $X$ is an n-dimensional MCS-space by induction. For an alternative proof of this result see \cite{Per-Morse}. 

The boundary $\mathcal{F}X$ is defined for $N\ge 2$ as 
\begin{equation}\label{eq:Alexboundary}
    \mathcal{F}X=\{p\in X: \Sigma_p(X) \text{ has boundary}\}.
\end{equation}
When $N=1$ Alexandrov spaces are manifolds, the boundary is just boundary of a manifold, see \cite[7.19]{BGP92}. This gives the inspiration to the Kapovitch-Mondino boundary \eqref{eq:KMboundary}. It is clear that when $(X,\dist,\haus^N)$ is viewed as a $\ncRCD(K,N)$ space, this boundary is exactly the Kapovitch-Mondino boundary, which justifies the use of notation.

Similar to the $\ncRCD$ case, the set of manifold points of $X$ is open and dense, the non manifold points of $X$ is of Hausdorff dimension and topological dimension at most $n-1$ if $X$ has boundary and codimension at most $n-2$ if $X$ does not have boundary. This follows  by combining \eqref{eq:sing}, \eqref{eq:boundaryrelation} and Theorem \ref{thm:regular}.  

We will also need the notion and properties of quasigeodesics on Alexandrov spaces \cite{PP-quasigeoodesics}. Recall that a unit speed curve $\gamma$ in an Alexandrov space is called a \emph{quasigeodesic} if restrictions of distance functions to $\gamma$ have the same concavity properties as their restrictions to geodesics. For example, for non-negatively curved Alexandrov space $X$ this means that for any $p\in X$ the function $t\mapsto d(\gamma(t), p)^2$ is 2-concave. Every geodesic is obviously a quasigeodesic but the converse need not be true. For example if $X$ is the unit disk in $\R^2$ then the boundary circle is a quasigeodesic in $X$.
Petrunin and Perelman showed  \cite{PP-quasigeoodesics} that for every point $p$ in an Alexandrov space there are infinite quasigeodesic starting in every direction at $p$.

\section{Invariance of Domain for Alexandrov spaces}\label{sec:inv}

As stated in the introduction, the invariance of domain for Alexandrov spaces has long been known by experts, we present here a precise statement and its proof due to Belegradek-Ivanov-Pertunin on mathoverflow \cite{BIP10}. 

\begin{theorem}\label{thm:inv}
Let $(X,\dist_X)$, $(Y,\dist_Y)$ be Alexandrov spaces of same dimension, $f:X\to Y$ be a injective continuous map. For any open subset $U\subset X$, if $U\cap \mathcal{F}X=\varnothing$  then $f(U)\cap \mathcal{F}Y=\varnothing$, and $f(U)$ is open in $Y$. 
\end{theorem}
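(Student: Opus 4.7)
The plan is to derive Theorem \ref{thm:inv} from the purely topological invariance of domain for MCS spaces (Theorem \ref{thm:inv-dom-mcs}), proved by induction on the common dimension $n$. The translation is enabled by Subsection \ref{subsec:Alexandrov}: every $N$-dimensional Alexandrov space is an MCS $N$-space whose conical link at $p$ is the space of directions $\Sigma_p(X)$, and the condition $p \notin \mathcal{F}X$ is exactly the recursive requirement that $\Sigma_p(X)$, itself an $(N-1)$-dimensional Alexandrov (hence MCS) space, has no boundary. So both hypothesis and conclusion of Theorem \ref{thm:inv} transfer verbatim to the MCS setting.

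First I would fix $p \in U$, set $q := f(p)$, and use the MCS pointed homeomorphism to pick a small closed conical neighborhood $\bar V = K_r(\Sigma_p) \subset U$. Its topological frontier $S = \partial V$ is pointed-homeomorphic to $\Sigma_p$, so $S$ is an $(n-1)$-dimensional MCS space without boundary. Compactness of $\bar V$ together with injectivity and continuity of $f$ make $f|_{\bar V}$ a homeomorphism onto its image, whence $q \notin f(S)$, and one may fix an open neighborhood $W \ni q$ in $Y$ disjoint from $f(S)$.

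The essential task is then to show $W \subset f(V)$ and subsequently $q \notin \mathcal{F}Y$. I would argue by induction on $n$, the base case $n = 0$ being immediate. In the inductive step, the idea is that $f(S)$ locally separates $Y$ near $q$, so that the component of $W \setminus f(S)$ containing $q$ must coincide with $W \cap f(V)$. To extract this separation I would apply the inductive hypothesis to $f$ restricted to $(n-1)$-dimensional cross-sections obtained from links of conical neighborhoods of points of $Y$ near $q$; coupled with the injectivity of $f$ and the fact that $f(\bar V)$ is a compact connected neighborhood of $q$, this forces the component and yields $W \subset f(V)$. To deduce $q \notin \mathcal{F}Y$, one observes that the homeomorphism $\bar V \to f(\bar V)$ together with $W \subset f(V)$ exhibits a closed cone neighborhood of $q$ whose frontier is homeomorphic to $\Sigma_p$; a local-homology computation through the cone formula $H_n(Y, Y \setminus q) \cong \tilde H_{n-1}(\Sigma_q)$, together with the inductive characterization of boundary via the link, rules out any boundary at $q$.

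The main obstacle will be the local-separation step inside the induction: namely, one must prove that $f(S)$, the image of a closed MCS $(n-1)$-space, really does cut a small neighborhood of $q$ in $Y$ into the right number of pieces. This is exactly where the no-boundary hypothesis on $\Sigma_p$ is critical, since otherwise $S$ would have free faces that $f$ could fold inside $f(V)$ without producing any separation. Once this step is in place, the openness of $f(U)$ and the exclusion $q \notin \mathcal{F}Y$ follow by standard bookkeeping.
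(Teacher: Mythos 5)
Your proposal and the paper's proof both aim to reduce Theorem~\ref{thm:inv} to the MCS invariance-of-domain statement (Theorem~\ref{thm:inv-dom-mcs}), but beyond that the routes diverge, and the version you sketch has a genuine gap precisely at the key step. You propose to prove the MCS statement itself by an induction on dimension, where the crux is that $f(S)$ (the image of the link $S\cong\Sigma_p$) locally separates $Y$ near $q$. As you yourself flag, this separation claim is exactly the nontrivial content of the theorem, and you do not supply an argument for it; ``apply the inductive hypothesis to cross-sections obtained from links of conical neighborhoods of points of $Y$ near $q$'' is not a proof, since injectivity of $f$ gives no control over how $f(S)$ sits relative to the conical structure of $Y$ near $q$ (in particular $f(S)$ need not be a sub-cone, a link, or a cross-section of anything near $q$). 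The paper avoids this entirely: Theorem~\ref{thm:inv-dom-mcs} is proved not by induction on dimension but by a $\Z_2$-cohomological degree argument, localizing at a pair of interior points to suspensions $U/C$, $Y/D$, and using Lemmas~\ref{lem:GP} and \ref{lem:Igor} (Alexander--Spanier duality for compactly supported cohomology plus the long exact sequence of a pair) to show that $\tilde f^*\colon H^n(Y/D)\to H^n(U/C)$ is simultaneously an isomorphism of copies of $\Z_2$ and the zero map if $\tilde f$ misses a point of the top stratum, a contradiction. That degree argument is the substitute for your separation step, and there is no evident way to reconstruct it from the induction you outline.

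Two further points. First, you never verify the hypothesis of Theorem~\ref{thm:inv-dom-mcs} that conical neighborhoods of points of $Y$ have connected top strata; the paper checks this by noting that the conical neighborhood of $p\in Y$ is homeomorphic to the tangent cone $T_pY$, which is a nonnegatively curved Alexandrov space and hence has connected top stratum. This is not a formality: the paper's remark after Theorem~\ref{thm:inv-dom-mcs} (the $\R^n\vee\R^n$ example) shows the conclusion fails without it. Second, for the statement $f(U)\cap\mathcal FY=\varnothing$ you propose a local-homology computation through the cone formula and an inductive characterization of boundary; this could in principle be made to work (since for a compact connected Alexandrov space $\Sigma$ one has $H^{n-1}(\Sigma;\Z_2)=\Z_2$ or $0$ according to whether $\mathcal F\Sigma=\varnothing$ or not), but it is again underdeveloped and more delicate than necessary. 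The paper instead passes to the metric double $\tilde Y$, which is a boundaryless Alexandrov space, applies Theorem~\ref{thm:inv-dom-mcs} to $f\colon X\to\tilde Y$, and notes that an open subset of $\tilde Y$ meeting $\mathcal FY$ must intersect both copies of $Y$, contradicting that $f$ lands in one copy. This doubling trick is cleaner and sidesteps the need to control $\Sigma_q$ at all.

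In short: your reduction to MCS invariance of domain is the right high-level move and matches the paper, but you do not have a proof of the MCS statement (the separation step is unproved and the paper's actual proof is a different, cohomological argument), and your treatment of the boundary exclusion and of the connected-link hypothesis is incomplete compared with the metric-double argument the paper uses.
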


This theorem follows from the following purely topological Invariance of Domain Theorem for MCS spaces.

\begin{theorem}\label{thm:inv-dom-mcs}
{
Let $X,Y$ be $n$ dimensional MCS spaces such that $X_{n-1}=Y_{n-1}=\varnothing$ and for all points in $Y$ their open conical neighborhoods have connected $n$-strata.
}

Let $f: X\to Y$ be continuous and injective.

Then $f(X)$ is open in $Y$ {and open conical neighborhoods of all points in $X$ have connected top strata.}

\end{theorem}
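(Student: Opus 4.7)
The plan is to proceed by strong induction on the dimension $n$. The base case $n=0$ is immediate: a $0$-dimensional MCS space is discrete, so every continuous injection is open, and the singleton conical neighborhoods trivially have connected top stratum.

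For the inductive step, fix $x\in X$ and set $y=f(x)\in Y$. Choose open conical neighborhoods $U$ of $x$ and $V$ of $y$, rewriting $\R^{\ell}\times K(\Sigma)$ as a cone $K(S^{\ell-1}\ast\Sigma)$ when necessary so that $x,y$ are identified with the cone apices; shrink so that $f(\bar U)\subset V$ for a compact closed subcone $\bar U$. Since $\bar U$ is compact Hausdorff and $f$ is a continuous injection into a Hausdorff space, $f|_{\bar U}$ is a topological embedding, and I identify $\bar U$ with its image. The two conclusions at $x$ follow once I show that $f(\bar U)$ fills a closed subcone of $V$ containing a $Y$-neighborhood of $y$, and that the link $L_x$ is homeomorphic to $L_y$.

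The links $L_x, L_y$ are compact MCS spaces of dimension $n-1$. The cone stratification formulas translate $X_{n-1}=Y_{n-1}=\varnothing$ into $(L_x)_{n-2}=(L_y)_{n-2}=\varnothing$, and the connected-top-strata hypothesis on $Y$ descends to $L_y$: a conical neighborhood in $V$ at a non-apex point factors as $\R\times(\text{conical neighborhood in }L_y)$, so connectedness of the top stratum of the former is equivalent to that of the latter. Hence $L_x,L_y$ fit the hypotheses of the theorem in dimension $n-1$, and the inductive hypothesis applies to any continuous injection between them.

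The crux is to promote the cone-level embedding $\bar U\hookrightarrow V$ to a continuous injection $L_x\hookrightarrow L_y$. Composing $f|_{L_x}$ with the radial retraction $V\setminus\{y\}=L_y\times(0,\infty)\to L_y$ is continuous but need not be injective, so I would argue homologically: the long exact sequence of $(V,V\setminus\{y\})$ together with excision yields $H_n(V,V\setminus\{y\})\cong\tilde H_{n-1}(L_y)$, and the embedded cone $\bar U$ realizes a degree-one comparison $\tilde H_{n-1}(L_x)\to\tilde H_{n-1}(L_y)$. Combined with the inductive conclusion at the link level --- that such an injection has open image with connected top stratum --- this forces $f(L_x)$ to be clopen in $L_y$; since the top stratum of $L_y$ is connected and dense, $L_y$ itself is connected and the image equals $L_y$. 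Thus $L_x\cong L_y$, $\bar U$ covers a closed subcone of $V$, $f(U)$ is open in $Y$, and the top stratum of $L_x$ is connected.

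The main obstacle is precisely the passage from the cone-level embedding to the link-level injection: radial projection alone does not suffice. Either one engineers a fiber-preserving reparametrization that makes $f$ radially aligned on the boundary sphere of $\bar U$, or one replaces the explicit injection by an Alexander-duality argument inside $V$ detecting $L_x\cong L_y$ from homological data alone, in either case carefully tracking the MCS stratification to ensure the inductive hypothesis genuinely applies at the link level.
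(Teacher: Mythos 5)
Your proposed induction on dimension via links stalls precisely at the point you yourself flag, and neither of your two suggested workarounds resolves it; the gap is real. From a topological embedding of cones $f\colon\bar U=K(L_x)\to V=K(L_y)$ taking apex to apex, there is no continuous \emph{injection} $L_x\to L_y$ available: restricting $f$ to $L_x$ lands in $K(L_y)\setminus\{y\}\cong L_y\times(0,1]$, and projecting to $L_y$ can identify points that $f$ sent to distinct radial heights. The ``fiber-preserving reparametrization'' you suggest has no basis — an embedding of one cone into another need not respect, even up to reparametrization, the radial foliation, and you offer no construction. The homological fallback is circular: you want to invoke the inductive hypothesis ``at the link level'' to conclude $f(L_x)$ is open with connected top stratum, but the inductive hypothesis takes as \emph{input} a continuous injection $L_x\to L_y$, which is exactly what you have failed to produce. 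The ``degree-one comparison $\tilde H_{n-1}(L_x)\to\tilde H_{n-1}(L_y)$'' realized by the embedded cone is asserted without justification (what map, induced how, and why nonzero?), and even granting it, agreement of the top $\Z_2$-cohomology of the links does not yield $L_x\cong L_y$, which you use as an intermediate goal but which is in any case much stronger than the theorem requires (two non-homeomorphic MCS spaces can certainly admit a $1$--$1$ continuous map between cone neighborhoods with open image).

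The paper sidesteps the link-injection problem entirely. It never tries to produce a map on links. Instead it collapses complements of small cone neighborhoods to obtain a map $\tilde f\colon U/C\to Y/D$ between \emph{compact} $n$-dimensional MCS spaces (suspensions over the links) with empty $(n-1)$-stratum and connected top stratum, then runs a direct $\Z_2$-cohomology argument. Two computational lemmas drive it: for such a compact MCS space $Z$ with $k$ top-stratum components and $Z_{n-1}=\varnothing$, $H^n(Z;\Z_2)\cong\Z_2^k$ (via Alexander--Spanier duality and the long exact sequence of $(Z,S)$); and for a top-stratum point $z$, $H^n(Z\setminus\{z\};\Z_2)=0$ while $H^n(Z,Z\setminus\{z\})\to H^n(Z)$ is an isomorphism. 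Finding a pair of corresponding manifold points and using classical invariance of domain plus the five lemma shows $\tilde f^*$ is an isomorphism; if $\tilde f$ missed a top-stratum point, $\tilde f^*$ would factor through $H^n(Y/(D\setminus\{z\}))=0$, a contradiction. The case where some conical neighborhood in $X$ has disconnected top stratum is then excluded by a separate argument (Lemma \ref{lem-top-strata} and the non-embeddability it produces). Your outline, by contrast, attempts to inherit the conclusion structurally through the links; the paper inherits only the \emph{cohomological size} of the links through suspension, which is exactly the amount of information that survives a merely continuous injection.
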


We need the following lemma regarding the  $\mathbb{Z}_2$-cohomology for 
{ MCS spaces originated from Grove-Petersen \cite{PG93}, initially stated for compact Alexandrov spaces without boundary}. Note that finite dimensional 
MCS spaces are locally compact, and locally contractible, since every point has a neighborhood homeomorphic to a cone, so Alexander-Spanier cohomology, singular cohomology and Cech cohomology all coincides. It is not necessary to specify which cohomology to use. In what follows all cohomology is taken with $\Z_2$ coefficients.

We will make use of the following duality which holds for Alexander-Spanier cohomology with compact support \cite[Chapter 1]{Massey-book}.
Given a locally compact and Hausdorff space $Y$ and a closed subset $A\subset Y$ it holds that $H^n_c(Y,A)\cong H^n_c(Y\setminus A)$.
{ 
\begin{lemma}\label{lem:GP}
Let $X$ be an $n$-dimensional compact MCS space where $X_n$ has $k$ connected components and $X_{n-1}=\varnothing$, then $H^n(X)\cong\mathbb{Z}_2^k$.
\end{lemma}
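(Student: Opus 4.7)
The plan is to combine three ingredients: the long exact sequence of the pair $(X,\Sigma)$ where $\Sigma = X\setminus X_n$ is the singular set; the Alexander--Spanier duality $H^n_c(Y,A)\cong H^n_c(Y\setminus A)$ recalled above; and Poincar\'e duality with $\mathbb{Z}_2$ coefficients for the top stratum viewed as an $n$-manifold. The induction will essentially degenerate to a single step because the hypothesis $X_{n-1}=\varnothing$ jumps two dimensions at once and lets us bypass any recursion over the strata.

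First I would observe that $\Sigma\defeq X\setminus X_n$ is closed in $X$ (this follows from the general fact, noted in Section 2.1, that $\bar X_l\subset\bigcup_{m\le l}X_m$), and that $\Sigma$ has covering dimension at most $n-2$: the hypothesis $X_{n-1}=\varnothing$ forces its top stratum to be $X_{n-2}$. Since MCS spaces are locally contractible and locally compact Hausdorff, Alexander--Spanier, \v{C}ech and singular cohomology with $\mathbb{Z}_2$ coefficients all coincide on $\Sigma$, and the standard vanishing $H^j(\Sigma)=0$ for $j\ge n-1$ holds. Writing down the long exact sequence of the pair
\begin{equation*}
  H^{n-1}(\Sigma)\longrightarrow H^n(X,\Sigma)\longrightarrow H^n(X)\longrightarrow H^n(\Sigma),
\end{equation*}
the two outer groups are zero, so the middle map is an isomorphism $H^n(X,\Sigma)\cong H^n(X)$.

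Next, compactness of $X$ gives $H^n(X,\Sigma)=H^n_c(X,\Sigma)$, and the Alexander--Spanier compactly supported duality quoted from Massey yields
\begin{equation*}
  H^n_c(X,\Sigma)\;\cong\;H^n_c(X\setminus\Sigma)\;=\;H^n_c(X_n).
\end{equation*}
The top stratum $X_n$ is an $n$-dimensional topological manifold (possibly non-compact) with exactly $k$ connected components. For each connected component $M$, Poincar\'e duality over $\mathbb{Z}_2$ (which bypasses the orientation issue) gives $H^n_c(M;\mathbb{Z}_2)\cong H_0(M;\mathbb{Z}_2)\cong\mathbb{Z}_2$, and summing over the $k$ components yields $H^n_c(X_n)\cong\mathbb{Z}_2^k$. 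Concatenating the two isomorphisms produces the desired $H^n(X)\cong\mathbb{Z}_2^k$.

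The only place a genuine argument is required is the justification that the cohomological dimension of $\Sigma$ is at most $n-2$; this is what the assumption $X_{n-1}=\varnothing$ buys us and it is the step most worth double-checking. For MCS spaces this is routine because a finite-dimensional MCS space admits a locally finite CW structure of matching dimension (inductively building cells from cones on compact lower-dimensional MCS spaces), so the vanishing above the top-stratum dimension is automatic; alternatively one can invoke the coincidence of covering dimension and cohomological dimension for metrizable spaces of finite covering dimension. Beyond this bookkeeping, everything else is a direct application of the machinery already cited in the paper.
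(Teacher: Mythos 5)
Your proof is correct and follows essentially the same route as the paper: the long exact sequence of the pair $(X,\Sigma)$, the Alexander--Spanier duality $H^n_c(X,\Sigma)\cong H^n_c(X_n)$ together with compactness of $X$, and Poincar\'e duality with $\Z_2$ coefficients for the (possibly non-compact) top stratum. The only difference is that you spell out the justification for $H^{n-1}(\Sigma)=H^n(\Sigma)=0$ via the cohomological dimension bound on $\Sigma$, which the paper states more tersely.
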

}
\begin{proof}
The proof is the same as in \cite{PG93}.

Since $X_n=X\setminus S$ is an $n$-manifold with $k$ connected components we have that $H^n_c(X\setminus S)\cong \Z_2^k$. On the other hand by Alexander-Spanier duality we have that $H^n_c(X\setminus S)\cong H^n_c(X,S)\cong H^n(X,S)$ where the last isomorphism holds since $X$ is compact. Now the result immediately follows from the long exact sequence of the pair $(X,S)$ using the fact that $S$ is the union of strata of dimension $\le 2$ and hence $H^{n-1}(S)\cong H^{n}(S)=0$.
\end{proof}

 Note that in the above proof we get that $H^n_c(X\setminus S)\cong H^n(X,S)\cong H^n(X)$.
Compare this to the proof of the following Lemma

\begin{lemma}\label{lem:Igor}
    Let $(X,\dist_X)$ be a compact $n$-dimensional MCS space with connected $X_n$ and ${X_{n-1}}=\varnothing$, take $x\in X_n$. 
 Then we have
    \begin{enumerate}
     \item\label{lem:Igoritem1} $H^n(X\setminus \{x\})=0$;
     \item\label{lem:Igoritem2} the inclusion $i: (X,\varnothing)\to (X,X\setminus\{x\})$ induces an isomorphism on cohomology, that is  
        \begin{equation}
         i^*: H^n(X, X\setminus \{x\})\to H^n(X)
        \end{equation}
    is an isomorphism.
    \end{enumerate}
\end{lemma}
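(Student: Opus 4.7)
I would prove item~\ref{lem:Igoritem1} first and derive item~\ref{lem:Igoritem2} from the long exact sequence of the pair $(X, X\setminus\{x\})$. The strategy for item~\ref{lem:Igoritem1} is to reduce $X\setminus\{x\}$ to a compact auxiliary space by a deformation retract, re-run the Alexander-Spanier argument of Lemma~\ref{lem:GP}, and finish with Poincar\'e-Lefschetz duality for a manifold with boundary (so the resulting top group vanishes rather than being $\Z_2$).

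Since $x\in X_n$ admits an open neighborhood pointed-homeomorphic to $\R^n$, pick an open ball $B$ about $x$ with $\bar B\subset X_n$ inside this chart. The radial map $y\mapsto y/|y|$ on $\R^n\setminus\{0\}$, glued with the identity outside the chart, gives a deformation retract of $X\setminus\{x\}$ onto the compact set $X'\defeq X\setminus B$, so $H^n(X\setminus\{x\})\cong H^n(X')$. Let $S=X\setminus X_n$; since $\bar B\subset X_n$, the set $S$ is closed in $X'$ with $\dim S\le n-2$, so $H^{n-1}(S)=H^n(S)=0$. The long exact sequence of $(X',S)$ then gives $H^n(X')\cong H^n(X',S)$, and compactness of $X'$ together with Alexander-Spanier duality yields
\[
  H^n(X',S)\cong H^n_c(X'\setminus S)=H^n_c(X_n\setminus B).
\]

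The space $X_n\setminus B$ is a connected topological $n$-manifold with non-empty boundary $\partial\bar B\cong S^{n-1}$; connectedness is clear for $n=1$ and holds for $n\ge 2$ because removing an open ball from a connected $n$-manifold preserves connectedness. Poincar\'e-Lefschetz duality with $\Z_2$-coefficients (where orientability is automatic) gives
\[
  H^n_c(X_n\setminus B;\Z_2)\cong H_0(X_n\setminus B,\,\partial\bar B;\Z_2)=0,
\]
the vanishing being immediate since every component of a connected manifold meets its non-empty boundary. Chaining the isomorphisms establishes item~\ref{lem:Igoritem1}.

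For item~\ref{lem:Igoritem2}, excision to the euclidean chart gives $H^k(X,X\setminus\{x\})\cong H^k(\R^n,\R^n\setminus\{0\})$, which equals $\Z_2$ for $k=n$ and $0$ for $k=n+1$. Combining this with $H^n(X)=\Z_2$ from Lemma~\ref{lem:GP} and $H^n(X\setminus\{x\})=0$ from item~\ref{lem:Igoritem1} in the long exact sequence of $(X,X\setminus\{x\})$ displays $i^*$ as a surjection between two copies of $\Z_2$, hence an isomorphism. The only step I would want to verify carefully is the availability of Poincar\'e-Lefschetz duality in the topological (not smooth) category for a possibly non-compact $n$-manifold with boundary; this is classical and covered by standard references with $\Z_2$-coefficients, so no genuine obstacle is anticipated.
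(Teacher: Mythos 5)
Your argument is correct, but it takes a genuinely different route from the paper's. You establish item~\ref{lem:Igoritem1} directly and derive item~\ref{lem:Igoritem2} from it, whereas the paper does the opposite. For item~\ref{lem:Igoritem2} the paper compares $H^n_c(U)\to H^n_c(X_n)$ for a connected open $U\subset X_n$ (an isomorphism since both are $\Z_2$ for a connected $n$-manifold), feeds this through a commuting square of Alexander--Spanier dualities to get $H^n(X,X\setminus U)\cong H^n(X,S)\cong H^n(X)$, and then passes to $H^n(X,X\setminus\{x\})$ via the same kind of deformation retract you use; item~\ref{lem:Igoritem1} then drops out of the long exact sequence. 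You instead compute $H^n(X\setminus\{x\})$ from scratch: retract to the compact $X'=X\setminus B$, run the Grove--Petersen argument to reach $H^n_c(X_n\setminus B)$, and finish with Poincar\'e--Lefschetz duality for the connected manifold $X_n\setminus B$ with non-empty boundary sphere, so the group is $H_0(X_n\setminus B,\partial\bar B;\Z_2)=0$; item~\ref{lem:Igoritem2} then follows by excision to $(\R^n,\R^n\setminus\{0\})$ together with the long exact sequence. The paper's route avoids manifolds-with-boundary and Lefschetz duality entirely, relying only on the orientation-class fact for open connected subsets of a boundaryless manifold; your route is more hands-on, explicitly identifying both groups in the exact sequence, and makes the vanishing of $H^n(X\setminus\{x\})$ conceptually transparent (the boundary sphere ``kills'' the top class). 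One small point worth flagging in your write-up: the connectedness of $X_n\setminus B$ for $n\ge 2$ deserves a word (it follows because $\partial B\cong S^{n-1}$ is connected, so any path in $X_n$ crossing $\bar B$ can be rerouted along the sphere), and the application of Poincar\'e--Lefschetz duality with $\Z_2$-coefficients to the possibly non-compact, second-countable manifold $X_n\setminus B$ is indeed classical and unproblematic, as you anticipate.
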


\begin{proof}[Proof of Lemma~\ref{lem:Igor}]
{

 We first show item \ref{lem:Igoritem2}. Let $U\subset X\setminus S=X_n$ be  connected and open. Since $X\setminus S$ is a manifold and $U$ is connected, we have that the inclusion $U\hookrightarrow X\setminus S$ induces an isomorphism between  compactly supported cohomology $H^n_c(U)$ and $H^n_c (X\setminus S)$.

 Also, since $X$ is compact we have that $H^n_c(X, X\setminus U)\cong H^n(X, X\setminus U)$ and similarly $H^n_c(X,S)\cong H^n(X,S)$.

  With this at disposal, consider the inclusion of pairs $(X,S)\hookrightarrow (X,X\setminus U)$, we have 
 
 \begin{tikzcd}
H^n_c(U) \arrow[r, "\cong"] \arrow[d,"\cong"]
& H^n_c(X\setminus S) \arrow[d, "\cong"] \\
H^n(X, X\setminus U) \arrow[r]
& |[]| H^n(X,S),
\end{tikzcd}\\

where the vertical arrows are Alexander-Spanier duality combined with the above isomorphisms  $H^n_c(X, X\setminus U)\cong H^n(X, X\setminus U)$ and $H^n_c(X,S)\cong H^n(X,S)$.
}

This gives an isomorphism between $H^n(X, X\setminus U)$ and $H^n(X,S)$ hence between $H^n(X, X\setminus U)$ and $H^n(X)$ by inclusion. Note that $X\setminus \{x\}$ deformation retract to $X\setminus U$ for some open conical open neighborhood $U\subset X\setminus S$ of $x$, which implies that $i^*: H^n(X, X\setminus \{x\})\to H^n(X)$ is an isomorphism. 

Next we show item \ref{lem:Igoritem1}. To compute $H^n(X\setminus \{x\})$, look at the long exact sequence for the pair $(X,X\setminus \{x\})$:
 \begin{equation}
     \cdots\rightarrow H^n(X,X\setminus \{x\})\xrightarrow{\cong}  H^n(X)\rightarrow H^n(X\setminus \{x\})\xrightarrow{0} H^{n+1}(X,X\setminus \{x\})\rightarrow\cdots,
 \end{equation}
$H^n(X\setminus \{x\})=0$ follows directly. 
\end{proof}

Now we can prove the invariance of domain for MCS spaces. The strategy is to localize $X,Y$ to suspensions over lower dimensional strata 
, so that the proof reduces to the case of compact MCS 
spaces with connected top stratum and empty codimension 1 stratum, where the above lemmas apply.

\begin{proof}[Proof of Theorem \ref{thm:inv-dom-mcs}]

{ Let us first prove the theorem under the extra assumption that for all points in $X$ the top strata of their conical neighborhoods are connected.}

We break the proof into steps.

{\bf Step 1: }Localize to suspensions, which are MCS spaces satisfying assumption in Lemma \ref{lem:GP} and Lemma \ref{lem:Igor}.

Let $x\in U$ and $y\defeq f(x)\in f(U)$. Both $x,y$ have a neighborhood homeomorphic to  cones over 
{ some $(n-1)$-dimensional MCS spaces} $\Sigma_x$, $\Sigma_y$, respectively. Take cone neighborhoods of $x$, $B_x\Subset B'_x\Subset U$, then there exists a cone neighborhood of $y$, say $B_y\subset f(U)$ such that $B_y\cap f(\overline{B'_x}\setminus B_x)=\varnothing$.  Let $C\defeq U\setminus B_x$ and $D\defeq Y\setminus B_y$. Note that both $U/C$ and $Y/D$ are homeomorphic to a suspension over $\Sigma_x$, $\Sigma_y$ respectively. The quotient map induces a new map $\tilde{f}: U/C\to Y/D$ between compact {$n$-dimensional MCS spaces with connected top stratum and empty codimension $1$ stratum. }
Observe that $\tilde{f}$ remains injective on $f^{-1}(B_y)=f^{-1}(Y\setminus D)$. 

It suffices to show that $\tilde{f}$ is surjective onto $B_y$ identified with its image in $Y/D$. By continuity, it suffices to show every 
point in $Y_n \cap B_y$ is in the image of $\tilde{f}$.

{\bf Step 2: }We show that $\tilde{f}^*: H^n(Y/D)\to H^n(U/C)$ is an isomorphism.

 First, we claim that there exists a 
 point $x'\in X_n $ such that $y'\defeq f(x)\in Y_n$. To see this, let $x\in U\cap X_n$, and take a compact neighborhood $B$, it is of topological dimension $n$, since $f$ is injective and continuous, it is a homeomorphism between $B$ and $f(B)$, so $f(B)$ also has topological dimension $n$, which means $f(B)$ can not be entirely in $\cup_{k=0}^{n-2}Y_k$, 
 which is of topological dimension at most $n-2$. Now that we have $x'\in X_n$ and $y'\in Y_n$ , we claim that $\tilde{f}^*:H^n(Y/D, Y/(D\setminus \{y'\}))\to H^n(U/C,U/(C\setminus\{x'\}))$ is an isomorphism. 

To this end, take an excision around the manifold neighborhood of $x',y'$ respectively. The desired claim reduces to showing that $f^*:H^n(B^n,B^n\setminus \{x'\})\to H^n(f(B^n),f(B^n)\setminus \{y'\})$ is an isomorphism for injective and continuous $f$ such that $f(x')=y'$, where $B^n$ is a ball in $\setR^n$. The invariance of domain for $\setR^n$ has been used to show that $f(B^n)$ is open so that an excision can be applied on $Y/D$. The invariance of domain for $\setR^n$ also shows $f:(B^n, B^n\setminus \{x'\})\to (f(B^n),f(B^n)\setminus \{y'\})$ is a homeomorphism, the claim follows.

Now consider the induced map $f^*$ between long exact sequences of the pairs $(Y/D, Y/(D\setminus \{y'\}))$ and $(U/C,U/(C\setminus \{x'\}))$, taking also into account item \ref{lem:Igoritem2} of Lemma \ref{lem:Igor}, by 5-Lemma it follows that $\tilde{f}^*: H^n(Y/D)\to H^n(U/C)$ is an isomorphism.

{\bf Step 3: }Arguing by contradiction assume that 
 $\tilde{f}$ is not surjective onto $(Y_n\cap B_y)$ identified with its image in $Y/D$, we show that $\tilde{f}^* :H^n(Y/D)\to H^n(X/C)$ is a zero map. However, $\tilde{f}^*$ cannot be both zero map and isomorphism (from Step 2), because by Lemma \ref{lem:GP}, $H^n(Y/D)= H^n(U/C)=\mathbb{Z}_2$, a contradiction.

For this purpose, suppose that a 
point $z\in Y_n\cap B_y$ is missed by $\tilde{f}$, then $\tilde{f}$ can be factored through 
\begin{equation}
    \tilde{f}: U/C\rightarrow Y/(D\setminus \{z\})\rightarrow Y/D.
\end{equation}
Since $H^n(Y/(D\setminus \{z\}))=0$ due to item \ref{lem:Igoritem1} of lemma \ref{lem:Igor}, $\tilde{f}^* :H^n(Y/D)\to H^n(X/C)$ is a zero map. 

{
This concludes the proof of the theorem under the extra assumption that for all points in $X$ the top strata of their conical neighborhoods are connected.

To complete the proof in the general case we will need the following general lemma.

\begin{lemma}\label{lem-top-strata}
Let $Z$ be a connected  $n$-dimensional MCS space space that that it's top stratum $Z_n$ is not connected. Then there exists a point $z\in Z$ such that the top stratum  of its conical neighborhood $U_z$ is not connected.
\end{lemma}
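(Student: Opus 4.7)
The plan is to argue by contradiction. Assume that for every $z \in Z$ the top stratum of some (hence any) open conical neighborhood $U_z$ of $z$ is connected; we will show that $Z_n$ must then be connected, contradicting the hypothesis.

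First observe that for any open conical neighborhood $U_z$ of any point $z \in Z$, the top stratum $(U_z)_n$ agrees with $U_z \cap Z_n$: indeed, the stratum of a point $w$ is determined purely by the pointed-homeomorphism type of a conical neighborhood of $w$, and if $w \in U_z$ then $w$ admits a conical neighborhood inside the open set $U_z$. So the working assumption says exactly that $U_z \cap Z_n$ is connected for every $z \in Z$.

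Now pick one connected component $A$ of $Z_n$ and set $B \defeq Z_n \setminus A$. By hypothesis both $A$ and $B$ are nonempty, and both are open in $Z_n$, hence open in $Z$ since $Z_n$ is open in $Z$. Because the union of strata of dimension less than $n$ has topological dimension at most $n-1$, the top stratum $Z_n$ is dense in $Z$, so $\bar A \cup \bar B = Z$. Since $Z$ is connected and $A, B$ are disjoint, the two closed sets $\bar A, \bar B$ cannot be disjoint either: else $Z = \bar A \sqcup \bar B$ would be a separation. Thus we can choose $z \in \bar A \cap \bar B$, and since $A, B$ are open and disjoint, necessarily $z \notin A \cup B = Z_n$, so $z$ is a singular point.

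Finally, apply the contradiction hypothesis at this $z$. Any open conical neighborhood $U_z$ meets both $A$ and $B$ (as $z$ lies in both their closures), so $U_z \cap Z_n$ has nonempty intersection with two distinct components of $Z_n$. But a connected subset of $Z_n$ must lie in a single component, contradicting the assumption that $(U_z)_n = U_z \cap Z_n$ is connected. Hence the initial assumption fails, and some $z$ has a conical neighborhood with disconnected top stratum.

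The argument is essentially soft point-set topology; the only subtlety, and the main point to get right, is the identification $(U_z)_n = U_z \cap Z_n$, which relies on the intrinsic (local) nature of the stratification of an MCS space.
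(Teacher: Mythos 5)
Your proof is correct, and it takes a genuinely different, somewhat more elementary route than the paper's. The paper picks two points $p,q$ in distinct components of $Z_n$, joins them by a path $\gamma$ (using implicitly that MCS spaces are locally path-connected, so connectedness gives path-connectedness), covers $\gamma$ by the closures of the finitely many components of $Z_n$ whose closures it meets, and extracts the desired point $z$ from the connectedness of $[0,1]$ under a finite closed cover. Your argument dispenses with paths and compactness entirely: you split $Z_n$ as $A\sqcup B$ with $A$ a single component and $B$ the rest, use density of $Z_n$ to get $\bar A\cup\bar B=Z$, then invoke connectedness of $Z$ to produce $z\in\bar A\cap\bar B$, which is automatically singular since $A,B$ are open and disjoint. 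Both arguments ultimately hinge on the key local identification you explicitly isolate, namely $(U_z)_n = U_z\cap Z_n$, which the paper uses silently; making it explicit is a nice clarification, since that identity is exactly what converts ``$U_z$ meets two components of $Z_n$'' into ``the top stratum of $U_z$ is disconnected.'' What your version buys is that it uses only soft point-set topology (density plus a separation argument) and avoids the path-existence and finiteness steps. One small remark: the density of $Z_n$ is already a standing background fact from Section~\ref{subsec:} (since $\bar X_l\subset\cup_{m\le l}X_m$), so your topological-dimension justification, while valid, is not strictly needed.
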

\begin{proof}[Proof of Lemma \ref{lem-top-strata}]
let $p, q$ be points lying in different connected components of $Z_n$. Since $Z$ is connected there is a path $\gamma:[ 0,1]\to Z$ such that $\gamma(0)=p, \gamma(1)=q$.
By compactness of $[0,1]$ there exists finitely many connected components $U_1,\ldots U_k$  of $Z_n$ whose closures intersect $\gamma$. Since the top stratum  is dense in $Z$ we have that
$\gamma$ is contained in $\bar U_1\cup \bar U_2\cup\ldots\cup \bar U_k$, therefore $[0,1]=\gamma^{-1}(\bar U_1)\cup \gamma^{-1}(\bar U_2)\cup \ldots\cup \gamma^{-1}(\bar U_k)$.
As all these sets are closed and $[0,1]$ is connected this covering can not be disjoint and hence  there is $t_0\in [0,1]$ which belongs to at least two $\gamma^{-1}(\bar U_j)$. Then $z=\gamma(t_0)$ satisfies the conclusion of the Lemma.

\end{proof}
We now continue with the proof of Theorem \ref{thm:inv-dom-mcs}.

Recall that we have proved the theorem under the assumption that all conical neighborhood of points in $X$ have connected top strata.

Now suppose there are some points in $X$ such that the top strata of their conical neighborhoods are not connected. Let $l$ be the largest number that $X_l$ contains such a point $x$. Take such $x\in X_l$.

Then its conical neighborhood $U_x$ has the form $\R^l\times C(\Sigma)$ where $\Sigma$ is $(n-l-1)$-dimensional MCS space. 
Note that points in $U_x$ outside of $\R^l\times \{*\}$ (here $*$ is the cone point in $C(\Sigma)$)  lie in the union of strata of dimension $>l$.

We claim that $\Sigma$ has more than one connected components. Indeed, if not then its top stratum is not connected while $\Sigma$ itself is connected. Then by Lemma \ref{lem-top-strata} applied to $\Sigma$  there exists a point $\sigma\in \Sigma$ such that the top stratum  of its conical neighborhood in $\Sigma$ is not connected. But then the corresponding point in $U_x$ will lie in $X_m$ for $m>l$ and also have the property that its conical neighborhood has more than one top stratum components. This contradicts the maximality of $l$ in the choice of $x$. 

Let $\Sigma'$ be one component of $\Sigma$. Then the subset $W'=\R^l\times C(\Sigma')\subset U_x$ is an $n$-dimensional MCS space with empty $(n-1)$-stratum and such that the top stratum of all conical neighborhoods in $W'$ are connected.
 Then we have an injective embedding $f: W'\to Y$ and by the proof above the image $f(W')$ is an open neighborhood of $f(x)$. But the same argument applies to any other component $\Sigma''$ of $\Sigma$ and gives another subset $W''\subset U_x$ which contains $x$ and such that $f(W'')$ is also an open neighborhood of $f(x)$. This contradicts injectivity of $f$ near $x$. Therefore under the assumption of the theorem conical neighborhoods of points in $X$ must necessarily have connected top strata. }
\end{proof}

\begin{remark}
The connectedness assumption of top strata of conical neighborhoods in $Y$ is essential. For example, take $Y=\R^n\bigvee  \R^n$ to be the wedge sum of two copies of $\R^n$ glued at $0$, $X=\R^n$ and $f:X\hookrightarrow Y$ be inclusion of the first copy of $\R^n$. 
This map is clearly  1-1 but the image is not open since it does not contain any neighborhood of $0$ in $Y$.

\end{remark}

\begin{remark}
The conclusion that conical neighborhoods of points in $X$ must be connected can be viewed as a non-embeddability result. In other words the following holds. Suppose $Y$ satisfies the assumption of the theorem and $X$ is an $n$-dimensional MCS space with empty $(n-1)$-stratum and such that there is a point in $X$ such that the top stratum of its conical neighborhood is not connected. Then there is no 1-1 continuous map $f:X\to Y$.
\end{remark}

\begin{proof}[Proof of Theorem \ref{thm:inv}]
	As pointed out in section \ref{subsec:Alexandrov}, every Alexandrov space is an MCS space with connected top stratum.
	{
	 The assumption $U\cap \mathcal{F}X=\varnothing$ implies that $U$ has empty codimension $1$ stratum. } Next, for every $p\in Y$ its conical neighborhood $W_p$ is homeomorphic to $T_pY$ which is a nonnegatively curved Alexandrov space.
	
 Since the top stratum  of $T_pY$ is connected the same is true for $W_p$.
	
	 It suffices to show that $f(U)\cap\mathcal{F}Y=\varnothing$, everything else follows from Theorem \ref{thm:inv-dom-mcs}. 
	
	Assume $\mathcal{F}Y\neq \varnothing$. Take the metric double $\tilde{Y}$ of $Y$, $\tilde{Y}$ an $n$-dimensional Alexandrov space without boundary, and $f:X\to Y$ extends to an injective and continuous map into $\tilde Y$ by post composing with the inclusion map $Y\hookrightarrow \tilde Y$. We still denote it by $f$. Applying Theorem \ref{thm:inv-dom-mcs} to $f:X\to \tilde{Y}$, we see that $f(U)$ must be open in $\tilde Y$. If there exists $z\in f(U)\cap \mathcal{F}Y$, then there exists an open neighborhood $V$ of $z$ in $f(U)\cap\tilde Y$. By definition of metric double $V$ must intersect both copies of $Y$ in $\tilde Y$, this is a contradiction to the definition of $f$, from which it follows that $f(U)$ can not intersect $\mathcal{F}Y$.
\end{proof}



\section{Equivalence of intrinsic and extrinsic boundary}\label{sec:equi}

\subsection{Alexandrov case}

\begin{proof}[Proof of Theorem \ref{thm:main1}]

    We first show that $\partial_{\rm top}\bar\Omega=\mathcal{F}\bar\Omega$. Since tangent cones at points in $\Omega$ have no boundary, we see that $\mathcal{F}\bar\Omega\subset\partial_{\rm top}\Omega$. Now take $p\in \partial_{\rm top}\bar\Omega$, if to the contrary $p\notin \mathcal{F}\bar\Omega$, then there is an open set $U\subset \bar{\Omega}$ containing $p$ such that $U\cap \mathcal{F}\bar\Omega=\varnothing$, since $\mathcal{F}\bar\Omega$ is closed. The Invariance of Domain Theorem \ref{thm:inv} applied to inclusion $i:\bar{\Omega}\hookrightarrow X$ yields that $i(U)=U$ is also an open subset of $X$, so $p\in U\subset {\rm Int}_{\rm top}(\bar\Omega)=\Omega$, a contradiction to $p\in\partial_{\rm top}\bar\Omega$.  

    Now that $\partial_{\rm top}\bar\Omega=\mathcal{F}\bar\Omega$, it follows immediately that $\Omega$ coincides with $\bar\Omega\setminus \mathcal{F}\bar\Omega$, which is the interior in the sense of Alexandrov spaces. So strong convexity of the interior of an Alexandrov space yields that $\gamma$ does not intersect $\mathcal{F}\Omega$ hence $\partial_{\rm top}\Omega$. 
    The proof of \ref{main1:item2} is completed by noticing that any $\dist_\Omega$ geodesic connecting points in ${\rm Int}_{\rm top}(\bar\Omega)=\Omega$ and entirely contained in $\Omega$ is a local geodesic of $(X,\dist_X)$.

For the proof of item \ref{main1:item3}, let $p,q\in \partial{\rm Int}_{\rm top}(\bar\Omega), d=\dist_\Omega(p,q)$, and $\gamma: [0,d]\to \bar\Omega$ be a unit  speed geodesic with respect to $\dist_\Omega$  joining $p,q$ such that $\gamma(0)=p$, $\gamma(1)=q$. For any small enough $\eps\in (0,d/3)$, take $p'=\gamma(\eps)$ and $q'=\gamma(d-\eps)$. 
{
We can find points in  $\{p'_n\}$ and $\{q'_n\}$  in $\Omega$ so that $p'_n\to p'$ and $q'_n\to q'$. The geodesic $\gamma_n$ of $(\bar\Omega,\dist_\Omega)$ joining $p_n$ and $q_n$  must  converge to $\gamma|_{[\eps,1-\eps]}$ otherwise there would have been branching geodesics between $p,q$. 
On the other hand $\gamma_n$ is a local geodesic of $(X,\dist_X)$ and hence is a quasigeodesic in $X$. Since limits of quasigeodesics are quasigeodesics it follows that 
 $\gamma|_{[\eps,1-\eps]}$   is a quasi-geodesic in $X$. Letting $\eps\to 0$ we conclude that $\gamma$ is a quasi-geodesic in $X$ as well.}
\end{proof}

\subsection{$\ncRCD$ case}
The purpose of this section is to prove Theorem \ref{thm:main2}. We need the following pairwise almost convexity proved by Deng in \cite[Theorem 6.5]{deng2020holder}.

\begin{proposition}\label{prop:pairconv}
Let $(X,\dist,\meas)$ be an $\RCD(K,N)$ space with ${\rm essdim}=n$. For $\meas\times\meas$-a.e.\ every $(x,y)\in\mathcal{R}_n\times \mathcal{R}_n$, there exists a geodesic joining $x,y$, and entirely contained in $\mathcal{R}_n$.
\end{proposition}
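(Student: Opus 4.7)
The plan is to combine the good interpolation properties of $W_2$-optimal transport in $\RCD(K,N)$ spaces with Deng's H\"older continuity of tangent cones along the interior of a geodesic. The underlying idea is that for a dynamical transport plan $\pi$ supported on $\mathrm{Geo}(X)$ with absolutely continuous interpolants, $\pi$-a.e.\ geodesic passes through $\mathcal{R}_n$ at a dense set of interior times; H\"older continuity will then promote this to regularity at \emph{every} interior time, so together with the endpoint hypothesis $x, y \in \mathcal{R}_n$ the whole geodesic lies in $\mathcal{R}_n$.

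First I would reduce by a countable exhaustion to compact sets $A, B \subset \mathcal{R}_n$ of positive $\meas$-measure and prove the statement for $\meas \otimes \meas$-a.e.\ pair in $A \times B$. Setting $\mu_0 = \meas|_A/\meas(A)$ and $\mu_1 = \meas|_B/\meas(B)$, the theory of $W_2$-optimal transport in essentially non-branching $\RCD(K,N)$ spaces provides a unique optimal dynamical lift $\pi \in \mathcal{P}(\mathrm{Geo}(X))$ of the optimal coupling between $\mu_0$ and $\mu_1$, with $(e_t)_*\pi \ll \meas$ for every $t \in [0,1]$. Since $\meas(X \setminus \mathcal{R}_n) = 0$, a countable union over a dense set $D \subset (0,1)$ then yields: for $\pi$-a.e.\ $\gamma$, $\gamma(t) \in \mathcal{R}_n$ for every $t \in D$. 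At this point I would invoke Deng's H\"older continuity theorem, which for $\pi$-a.e.\ $\gamma$ makes $t \mapsto T_{\gamma(t)} X$ continuous in the pointed measured Gromov-Hausdorff (pmGH) topology on $(0,1)$; since the pmGH-limit of copies of $(\R^n, 0, \leb^n)$ is again $(\R^n, 0, \leb^n)$, density of $D$ forces $\gamma(t) \in \mathcal{R}_n$ for every $t \in (0,1)$, and the endpoints are regular by hypothesis.

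The step I expect to be the main obstacle is the passage from ``$\pi$-a.e.\ pair'' to ``$\meas \otimes \meas$-a.e.\ pair''. The $W_2$-optimal plan $\pi$ is concentrated on the graph of the optimal map $T \colon A \to B$, so $(e_0, e_1)_*\pi$ is supported on a very thin subset of $A \times B$ and is not absolutely continuous with respect to $\meas \otimes \meas$. To translate the almost-everywhere statement along $\pi$ into one on a.e.\ pair in $A \times B$, one must either sweep out $A \times B$ by varying the marginals through a parametrized family of optimal plans, or construct a richer (non-optimal but geodesic-consistent) dynamical plan whose $(e_0, e_1)$-pushforward is absolutely continuous with respect to $\meas \otimes \meas$---for instance via the Cavalletti-Mondino one-dimensional localization applied to a sufficiently rich family of $1$-Lipschitz functions, or through a suitable disintegration/perturbation argument. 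Executing this last step while preserving the good interpolation property used to derive regularity at interior times is the technical core of the argument.
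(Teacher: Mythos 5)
The paper does not give its own proof of this proposition; it cites Deng~\cite[Theorem~6.5]{deng2020holder} and instead proves the slight strengthening Proposition~\ref{thm:almostconvex} (which implies the present statement via Fubini over $x\in\mathcal{R}_n$) by Cavalletti--Mondino $1$-D localization. Your ``dense regular interior times plus H\"older continuity'' mechanism is exactly the right engine, and matches what Proposition~\ref{prop:dentofull} packages: if $\gamma\cap\mathcal{R}_n$ is dense in the interior of $\gamma$, then the whole interior lies in $\mathcal{R}_n$. However, as written the proposal has a genuine gap, one which you yourself correctly identify: the $W_2$-optimal plan $\pi$ between two absolutely continuous marginals $\meas\mres A$ and $\meas\mres B$ is concentrated on the graph of a map, so $(e_0,e_1)_*\pi$ is mutually singular with $\meas\times\meas$, and a ``$\pi$-a.e.\ geodesic'' conclusion carries no information about a $\meas\times\meas$-a.e.\ pair. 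The argument halts precisely where it would have to begin producing the claimed conclusion, and ``sweeping out $A\times B$ by varying marginals'' is not something you justify.

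The missing step---which you do list among your candidate fixes---is to freeze one endpoint at a time rather than work with two diffuse marginals. Fix $x$ and apply the Cavalletti--Mondino disintegration (Theorem~\ref{thm:disint}) to the $1$-Lipschitz function $\dist(x,\cdot)$; its transport rays are geodesics that, when extended, pass through $x$. Since $\meas(X\setminus\mathcal{R}_n)=0$, for $\mathfrak{q}$-a.e.\ ray one has $\meas_\alpha(X\setminus\mathcal{R}_n)=0$, and since $\meas_\alpha=h_\alpha\,\haus^1\mres X_\alpha$ with $h_\alpha$ log-concave hence $\haus^1$-a.e.\ positive on the ray, $\mathcal{R}_n$ is $\haus^1$-dense on it; Proposition~\ref{prop:dentofull} then places the whole interior of the ray inside $\mathcal{R}_n$. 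This produces, for \emph{every} $x$, a full-measure set $R_x$ of endpoints $y$ admitting a geodesic from $x$ lying in $\mathcal{R}_n$ except possibly at $x$; Fubini over $x\in\mathcal{R}_n$ yields the pairwise a.e.\ statement. An equivalent route in your optimal-transport language is to transport from $\delta_x$ to $\meas\mres B$ and invoke the measure contraction property to keep $(e_t)_*\pi_x\ll\meas$; either way, the structural move you stopped short of is the one-endpoint-at-a-time decomposition, and that step is essentially the whole proof.
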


 \begin{proof}[Proof of Theorem \ref{thm:main2}]
 We assume that $\bar\Omega\neq X$, otherwise it either contradicts the assumption $\bar \Omega\cap \partial X=\varnothing$ or makes the statement trivial. We break the proof into several steps.
 
 {\bf Step 1:} we show $\partial \bar\Omega\subset \partial_{\rm top} \bar\Omega$.

 Observe that $\dist_\Omega$ and $\dist_X$ coincide on sufficiently small open subsets of $\Omega$, hence tangent cones taken at the same point by the same rescaling sequence w.r.t. both metrics are isometric for points in $\Omega$, in particular tangent cones at points in $\Omega$ have no boundary. Which means that  $\mathcal{S}^{N-1}(\bar\Omega)\setminus \mathcal{S}^{N-2}(\bar\Omega)\subset \partial_{\rm top} \bar\Omega$. Since $\partial_{\rm top} \bar\Omega$ is closed, we have $\partial \bar\Omega\subset \partial_{\rm top} \bar\Omega$.
 
 {\bf Step 2:} Suppose $\partial_{\rm top} \bar \Omega\subset \partial \bar\Omega$ is not true, we find a point $q\in \partial_{top}\bar\Omega\setminus \partial \bar\Omega$ so that $q\in \mathcal{R}(X)$.

 First, there exists $p\in \partial_{\rm top}\bar\Omega\setminus \partial \bar\Omega$. Since $\partial_{\rm top}\bar\Omega$ and $\partial \bar\Omega$ are both closed, there exists $\eps>0$ such that $B_{2\eps}(p)\cap \partial \bar\Omega=\varnothing$. Now consider any two points in $B_{\eps/2}(p)$. By triangle inequality, any geodesic joining such two points lies in $B_{\eps}(p)$ hence does not intersect $\partial \bar\Omega$, moreover, note that $\haus^N_X(B_{\eps/2}(p)\cap \Omega)>0$, $\haus^N_X(B_{\eps/2}(p)\cap (X\setminus \bar\Omega))>0$ (recall we assumed $\bar\Omega\neq X$), by Deng's pairwise almost convexity of the regular set, Proposition \ref{prop:pairconv}, there exist $x\in B_{\eps/2}(p)\cap \Omega\cap \mathcal{R}(X)$ and $y\in B_{\eps/2}(p)\cap (X\setminus \bar\Omega)\cap \mathcal{R}(X)$ such that some geodesic, denote it by $\gamma_{xy}$, joining $x,y$ is entirely contained in $\mathcal{R}(X)$, meanwhile, $\gamma_{xy}$ must intersect $\partial_{\rm top}\bar\Omega$, and the point of intersection, denoted by $q$, is the desired point. 
 
 {\bf Step 3:} We show that for the point  $q$ we found in step $2$, there exists a neighborhood $U$ so that $\partial_{\rm top}\bar \Omega\cap U$ has Hausdorff codimension at least $2$ (recall Remark \ref{rmk:equiHaus}), and there exists $\delta\defeq \delta(K,N)>0$ depending only on $K,N$ such that $\Theta_{\bar\Omega}(x)\le 1-\delta$ for any $x\in \partial_{\rm top}\bar \Omega\cap U$.

 Since $q\in \mathcal{R}(X)\cap (\partial_{\rm top}\bar \Omega\setminus \partial \bar\Omega)$, there exists an open neighborhood $U$ such that $U$ is homeomorphic to a manifold and $U\cap \partial \bar\Omega=\varnothing$.  We claim that $\partial_{\rm top}\bar \Omega\cap U\subset \mathcal{S}^{N-2}(\bar\Omega)$. It suffices to show $\partial_{\rm top}\bar \Omega\cap U\subset \mathcal{S}(\bar\Omega)$ since $U$ is disjoint from $\partial \bar\Omega$. 
 
 Let $\delta\defeq\delta(K,N)>0$ be as in Theorem \ref{thm:regular}, if there exists $x\in \partial_{\rm top}\bar \Omega\cap U$ with $\Theta_{\bar\Omega}(x)> 1-\delta$ then there exists $V\subset U\cap \bar\Omega$ containing $x$, open relative to $\bar\Omega$, and homeomorphic to a manifold. Now the invariance of domain for manifolds applied to the inclusion $V\hookrightarrow U$ yields that $V$ is open in $X$, hence $V\subset \Omega$. This contradicts that $x\in \partial_{\rm top}\bar \Omega$. Therefore  for any $x\in \partial_{\rm top}\bar \Omega\cap U$ it holds that $\Theta_{\bar\Omega}(x)\le  1-\delta$ which by the choice of $\delta$ implies that  $\partial_{\rm top}\bar \Omega\cap U\subset \mathcal{S}^{N-2}(\bar\Omega)$. 
  Since Hausdorff codimension of $\mathcal{S}^{N-2}(\bar\Omega)$ is at least $2$, the proof of this step is completed.

 {\bf Step 4:} We show that when we blow up the inclusion map $ i_0:\bar\Omega\hookrightarrow X$ at $q$, the induced map $i_1: T_q\bar \Omega\to T_q X  \cong \R^N$ is not surjective near $0$, in fact, $0$ is on the topological boundary of $i_1(T_q\bar \Omega)$.
 
 Denote by $B^X_r$ (resp. $B^{\bar\Omega}_r$) the ball of radius $r$ in metric $\dist_X$ (resp. $\dist_\Omega$). We claim that $\haus^N_X(B^{\bar\Omega}_r(x))=\haus^N_{\bar\Omega}(B^{\bar\Omega}_r(x))$ for $x\in U\cap \bar\Omega$ and $r>0$ small enough so that $B^{\bar\Omega}_r(x)\subset U$. Observe that the two distances $\dist_X$ and $\dist_{\Omega}$ coincide with each other for small enough open subsets in $\Omega$, so $\haus^N_X$ and $\haus^N_{\bar\Omega}$ gives the same mass to open subsets of $\Omega$. Now observe that $B^{\bar\Omega}_r(x)=(B^{\bar\Omega}_r(x)\cap \Omega)\cup (B^{\bar\Omega}_r(x)\cap\partial_{\rm top} \bar\Omega)$, where the former is open in $\Omega$, the latter has codimension at least $2$ proved in step 3 hence measure zero, which completes the proof of the claim. Recall from step 2 and step 3 we know that $\Theta_X(p)=1$ and $\Theta_{\bar\Omega}(p)\le 1-\delta$, it follows
 
 \begin{equation}\label{eq:density}
     \lim_{r\to 0} \frac{\haus^N_X(B^{\bar\Omega}_r(p))}{\haus^N_X(B^{X}_r(p))}=\lim_{r\to 0} \frac{\haus^N_{\bar\Omega}(B^{\bar\Omega}_r(p))}{\haus^N_X(B^{X}_r(p))}=\frac{\Theta_{\bar\Omega}(p)}{\Theta_{X}(p)}\le 1-\delta.
 \end{equation} 
 If $i_1(T_q \bar \Omega)$ contains $B^{\R^N}_{\eps}(0)$ for some $\eps>0$, then the local coincidence of the metrics when away from boundary implies $B^{\R^N}_{\eps/2}(0)=B^{T_q\bar\Omega}_{\eps/2}(0)$, which in turn implies $\haus^N_{\R^N}(B^{T_q\bar\Omega}_{\eps/2}(0))=\haus^N_{\R^N}(B^{\R^N}_{\eps/2}(0))$, this contradicts \eqref{eq:density}.

 {\bf Step 5:} We derive a contradiction by iteratively blowing up at a topological boundary point.

 If $N=1$, then the statement is clear thanks to the classification theorem \cite{KL15}. It suffices to consider the case $N\ge 2$. In this case the topological boundary of $T_q\bar\Omega$ is more than a single point, to show this, it is enough to notice that $i_1$ is bi-Lipschitz (recall remark \ref{rmk:equiHaus}), so it is an homeomorphism onto its image. 
 
 Now we summarize the properties needed for the blow-up procedure. In the setting of this theorem, let $i_0:\bar \Omega\hookrightarrow X$ be the inclusion map, $q\in \partial_{\rm top}\bar\Omega$ and $i_1: T_q\bar\Omega\to T_qX$ be the blow-up of $i_0$ at $q$. In order for the cone tip of $T_q\bar\Omega$ to be on $\partial_{\rm top}i_1(T_q\bar\Omega)$, it is sufficient to have:
 
 \begin{enumerate}
     \item $q\in \mathcal{R}(X)$ and $\Theta_{\bar\Omega}(q)\le 1-\delta$; 
     \item $\haus^N_{\bar\Omega}(B^{\bar\Omega}_r(q))=\haus^N_{X}(B^{\bar\Omega}_r(q))$ for sufficiently small $r>0$;
     \item $q\notin \mathcal{F}\bar\Omega$.
 \end{enumerate}
 
 After the blow-up procedure in step 4, the ambient space $T_q X\cong \R^N$ has no singular points, moreover, $q\notin \partial \bar\Omega$ implies $q\notin  \mathcal{F}\bar\Omega$, which means iterated tangent cones at $q$ w.r.t. $(\bar\Omega, \dist_\Omega)$ have no boundary, so every point on  $\partial_{\rm top} i_1(T_q\bar\Omega)$ (not empty by step 4) still satisfies the conditions listed above, so we can continue blowing up at any point on $\partial_{\rm top} i_1(T_q\bar\Omega)$ other than the cone tip, each time keeping the the base point a point on the topological boundary.  In finitely many blow-up procedures, we end up with a bi-Lipschitz map $i_N: \R^N\to \R^N$ such that $i_N(0)=0$, $i_N$ not surjective, and $0$ is on the topological boundary of $i_N(\R^N)$, this is impossible by invariance of domain.
 
 \end{proof}

\section{Applications}\label{sec:AppConv}
 In this section we derive from the boundary equivalence in various ambient spaces the {locally totally geodesic} property, i.e., a subset satisfying $\ncRCD(K,N)$ condition forces the geodesics in intrinsic metric joining interior points to be disjoint from boundary. 
 
 We first introduce a technical result which is a direct consequence of H\"older continuity along interior of tangent cones pointed out in \cite[Corollary 1.5]{CN12}, it is available for $\ncRCD(K,N)$ spaces thanks to Deng's generalization of this statement \cite{deng2020holder}. 

\begin{proposition}\label{prop:dentofull}
Let $(X,\dist,\meas)$ be an $\RCD(K,N)$ space, and $\gamma$ be a geodesic in $X$. The set of points in $\gamma$ with unique tangent cone is relatively closed in the interior of $\gamma$. In particular, for each integer $1\le k\le N$, $\gamma\cap \mathcal{R}_k$ is closed relative to the interior of $\gamma$. If in addition $\gamma\cap \mathcal{R}_k$ is dense in the interior of $\gamma$, then it is all of the interior.
\end{proposition}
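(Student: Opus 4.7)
The plan is to use Deng's H\"older continuity of rescaled pointed balls along the interior of a geodesic (the $\RCD$ generalization of \cite[Corollary 1.5]{CN12}) as a black box and argue by contradiction. Let $A$ denote the set of parameters $t$ in the interior of $\gamma$ for which the tangent cone at $\gamma(t)$ is unique. The main goal is to prove that $A$ is relatively closed in the interior; the refinement to $\mathcal{R}_k$ and the density statement will then follow by short additional arguments.

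Fix $t_n \to t_0$ with $t_n \in A$ and $t_0$ in the interior of $\gamma$, and suppose for contradiction that $\gamma(t_0)$ admits two non-isometric tangent cones $C_1, C_2$, realized as pointed measured Gromov--Hausdorff limits of the rescaled pmm spaces $(X, (r_i^j)^{-1}\dist, \meas_{r_i^j}, \gamma(t_0))$ along scale sequences $r_i^j \downarrow 0$, $j=1,2$. Deng's estimate supplies, for every sufficiently small fixed scale $r$ and any pair of interior parameters $s,t$ in a compact subinterval around $t_0$, a bound of the form $d_{\rm pmGH}\bigl( r^{-1}B_r(\gamma(s)),\, r^{-1}B_r(\gamma(t))\bigr) \le C\,|s-t|^\alpha$ with $C,\alpha$ independent of $r$. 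Apply it with $(s,t,r) = (t_n, t_0, r_i^j)$. On the $t_0$ side, letting $i \to \infty$ yields $C_j$ by construction; on the $t_n$ side, uniqueness of the tangent cone $T_n$ at $\gamma(t_n)$ forces $(X, (r_i^j)^{-1}\dist, \meas_{r_i^j}, \gamma(t_n)) \to T_n$ for both $j=1,2$. Passing to the limit gives $d_{\rm pmGH}(T_n, C_j) \le C|t_n - t_0|^\alpha$ for $j=1,2$, and the triangle inequality then produces $d_{\rm pmGH}(C_1, C_2) \le 2C|t_n - t_0|^\alpha \to 0$, contradicting $C_1 \not\cong C_2$.

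The specialization to $\mathcal{R}_k$ comes for free: if $\gamma(t_n) \in \mathcal{R}_k$ then $T_n \cong \R^k$, and the bound $d_{\rm pmGH}(C, \R^k) \le C|t_n - t_0|^\alpha \to 0$ for every tangent cone $C$ at $\gamma(t_0)$ forces the (now unique) tangent cone at $\gamma(t_0)$ to be $\R^k$, placing $\gamma(t_0)$ in $\mathcal{R}_k$. The final assertion is purely topological: a subset that is both closed and dense in a topological space must coincide with that space, so $\gamma \cap \mathcal{R}_k$ closed and dense in the interior of $\gamma$ implies it is the whole interior.

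The only genuine obstacle is to isolate Deng's estimate in precisely the ``fixed-scale, scale-independent constants'' form used above, so that the pmGH limit as $r \to 0$ commutes with the H\"older bound in the time variable; once this is available, the rest is a triangle-inequality squeeze driven by the uniqueness hypothesis at the approximating points $t_n$.
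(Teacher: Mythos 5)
The paper states this proposition without proof, treating it as a direct consequence of Deng's H\"older continuity of tangent cones along the interior of a geodesic (the $\RCD$ generalization of Colding--Naber). Your argument supplies precisely the intended fill-in: a scale-uniform H\"older bound on rescaled balls applied along both tangent-cone scale sequences at $t_0$, funneled through the unique tangent cone $T_n$ at the approximating points $t_n$, then a triangle-inequality squeeze; the $\mathcal{R}_k$ refinement and the closed-plus-dense step follow exactly as you say. This is correct and is the argument the authors have in mind.
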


We start with the following simplest setting, where the ambient space is a smooth manifold {but there are no assumption on the regularity of topological boundary}.
\begin{theorem}\label{thm:smooth-rcd-subset}
    Let $(M,g)$ be an $n$-dimensional smooth
    manifold, and $\Omega\subset M$ be open, connected and such that ${\rm Int}(\bar\Omega)=\Omega$. If $(\bar\Omega,\dist_\Omega, \vol_g\mres \bar\Omega)$ is a $\ncRCD(K,n)$ space, then 
    \begin{enumerate}
        \item $\partial_{\rm top}\bar \Omega=\partial \bar\Omega$.
        \item any minimizing geodesic in $(\bar\Omega,\dist_\Omega)$ joining two points in $\Omega$ does not intersect $\partial \Omega$ hence a local geodesic in $(M,g)$, {i.e., $\Omega$ is locally totally geodesic};
        \item any minimizing geodesic in $(\bar\Omega,\dist_\Omega)$  joining two points on $\partial_{\rm top}\bar\Omega$ is either entirely contained in $\partial_{\rm top}\bar\Omega$, or its interior is entirely in $\Omega$. In the latter case the minimizing geodesic is also a local geodesic in $(M,g)$.
        
    \end{enumerate}
\end{theorem}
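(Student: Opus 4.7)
The plan is to prove (1) by invoking Theorem \ref{thm:main2} applied locally, and to derive (2) and (3) from (1) together with Proposition \ref{prop:dentofull} and Proposition \ref{thm:almostconvex}.

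For (1), the inclusion $\partial\bar\Omega \subset \partial_{\rm top}\bar\Omega$ is immediate, since $\dist_\Omega$ and $\dist_M$ coincide on a neighborhood of each $p\in\Omega$, so the tangent cone of $\bar\Omega$ at $p$ is the tangent cone of the smooth manifold $M$, namely $(\R^n,|\cdot|)$, which has empty boundary. For the reverse inclusion I would apply Theorem \ref{thm:main2} in a small neighborhood of any point of $\bar\Omega$: smoothness of $M$ produces a local Ricci lower bound, so $M$ is locally $\ncRCD(K',n)$, and $\vol_g\mres\bar\Omega$ agrees with $\haus^n_{\bar\Omega}$ off the topological boundary (which has vanishing $n$-dimensional Hausdorff measure a posteriori). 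The remaining hypothesis to verify is the local Lipschitz estimate $\dist_\Omega\le C\dist_M$ near each point of $\partial_{\rm top}\bar\Omega$; I expect this to follow by combining Bishop--Gromov on $(\bar\Omega,\dist_\Omega)$ with the identity of the two reference measures, ruling out cuspidal behaviour of $\bar\Omega$ at boundary points. As a fallback I would replay the blow-up strategy from the proof of Theorem \ref{thm:main2} directly in the smooth ambient: use Deng's pairwise a.e.\ convexity inside $M$ to locate a manifold point $q$ of $M$ lying on $\partial_{\rm top}\bar\Omega$; observe that $\Theta_{\bar\Omega}(q)\le 1-\delta$ for the $\delta$ of Theorem \ref{thm:regular}, lest invariance of domain applied to a manifold neighborhood of $q$ in $\bar\Omega$ force it to be open in $M$ and contradict $q\in\partial_{\rm top}\bar\Omega$; then iteratively blow up the inclusion $\bar\Omega\hookrightarrow M$ at topological boundary points until the ambient is $\R^n$ and contradict invariance of domain there.

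For (2), let $p,q\in\Omega$ and let $\gamma\colon[0,d]\to\bar\Omega$ be a minimizing $\dist_\Omega$-geodesic. The local coincidence $\dist_\Omega=\dist_M$ on $\Omega$ yields $T_p\bar\Omega = T_pM = \R^n$, so $\Omega\subset\mathcal{R}_n(\bar\Omega)$; conversely Theorem \ref{thm:regular} together with invariance of domain gives $\mathcal{R}_n(\bar\Omega)\subset\Omega$. Hence both endpoints of $\gamma$ lie in $\mathcal{R}_n$, and it suffices to show that the whole interior of $\gamma$ lies in $\mathcal{R}_n$, for then $\gamma\subset\Omega$ and $\gamma$ is a local $(M,g)$-geodesic using $\dist_\Omega=\dist_M$ locally on $\Omega$. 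By Proposition \ref{prop:dentofull}, $\gamma\cap\mathcal{R}_n$ is relatively closed in the interior $(0,d)$, so the task reduces to density of this set there. I would establish density by combining Proposition \ref{thm:almostconvex} applied at $p$ with Deng's H\"older continuity of tangent cones along $\gamma$: for a sequence $y_k\to q$ with $y_k\in R_p$, the minimizing geodesics $\sigma_k$ from $p$ to $y_k$ have interior in $\mathcal{R}_n$ and subconverge to some minimizing geodesic $\sigma^*$ from $p$ to $q$; essential non-branching in $\ncRCD$ together with uniqueness results for geodesics between regular points should identify $\sigma^*$ with $\gamma$ up to reparametrization, yielding the desired density.

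For (3), let $\gamma$ be a minimizing $\dist_\Omega$-geodesic joining $p, q\in\partial_{\rm top}\bar\Omega$. If its interior meets $\Omega$ at some $\gamma(t_0)$, then $\gamma(t_0)\in\mathcal{R}_n$ and I would apply the same density-plus-closedness argument with $\gamma(t_0)$ playing the role of the seed regular point, concluding that the entire interior of $\gamma$ lies in $\mathcal{R}_n=\Omega$ and $\gamma$ is a local $(M,g)$-geodesic as before. Otherwise no interior point of $\gamma$ meets $\Omega$, so $\gamma\subset\partial_{\rm top}\bar\Omega$, giving the other branch of the dichotomy. The principal technical obstacles are the local Lipschitz condition in step (1) and the density of $\gamma\cap\mathcal{R}_n$ in the interior of $\gamma$ in step (2); once these are in hand, the remainder of the proof is a mechanical combination of Theorem \ref{thm:main2} with Proposition \ref{prop:dentofull} and Proposition \ref{thm:almostconvex}.
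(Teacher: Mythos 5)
Your proposal correctly identifies the relevant tools (invariance of domain, Theorem \ref{thm:main2}, Proposition \ref{prop:dentofull}), but both the primary route to item (1) and the arguments for items (2) and (3) have genuine problems, and the actual argument of the paper is substantially simpler than what you outline.

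For (1), the local Lipschitz hypothesis $\dist_\Omega\le C\dist_M$ of Theorem \ref{thm:main2} is not among the assumptions of Theorem \ref{thm:smooth-rcd-subset}, and the proposed derivation from Bishop--Gromov plus agreement of reference measures is not a proof: density bounds control volumes of $\dist_\Omega$-balls but do not by themselves rule out cuspidal approach of $\Omega$ to a boundary point, which is exactly the behavior that would break the Lipschitz comparison. Your fallback of replaying the full blow-up scheme from Theorem \ref{thm:main2} is sound but unnecessarily heavy when the ambient is smooth. The paper's argument needs no iterated blow-ups: if a tangent cone of $(\bar\Omega,\dist_\Omega)$ at $p\in\partial_{\rm top}\bar\Omega$ were $\R^n$, then by Theorem \ref{thm:regular} $p$ would have a neighborhood $V$ open in $\bar\Omega$ and homeomorphic to a manifold; invariance of domain applied to the inclusion $V\hookrightarrow M$ (where $M$ is already a manifold) forces $V$ to be open in $M$, hence $V\subset\Omega={\rm Int}(\bar\Omega)$, contradicting $p\in\partial_{\rm top}\bar\Omega$. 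That single application of invariance of domain replaces the whole iterative scheme.

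For (2) and (3), the plan to establish density of $\gamma\cap\mathcal R_n$ in the interior of $\gamma$ via Proposition \ref{thm:almostconvex} does not go through. The geodesics $\sigma_k$ from $p$ to $y_k\in R_p$ subconverge to \emph{some} minimizing geodesic $\sigma^*$ from $p$ to $q$, but there is no reason for $\sigma^*$ to equal $\gamma$: essential non-branching and pairwise a.e.\ uniqueness give uniqueness for $\meas\times\meas$-a.e.\ endpoint pair, not for the particular pair $(p,q)$ at hand. Worse, even if one could arrange $\sigma_k\to\gamma$, knowing that each $\sigma_k((0,1))\subset\mathcal R_n$ only places $\gamma((0,1))$ in the closure of $\mathcal R_n$, which is all of $\bar\Omega$, and does not give density of $\mathcal R_n$ \emph{along} $\gamma((0,1))$. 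The paper avoids density altogether: once one knows (from the argument for (1)) that $\mathcal R_n(\bar\Omega)=\Omega$, both $\gamma((0,1))\cap\partial_{\rm top}\bar\Omega$ and its complement $\gamma((0,1))\setminus\partial_{\rm top}\bar\Omega=\gamma((0,1))\cap\mathcal R_n$ are relatively closed in $\gamma((0,1))$ — the former because $\partial_{\rm top}\bar\Omega$ is closed, the latter by Proposition \ref{prop:dentofull}. Connectedness of $\gamma((0,1))$ then forces one of the two to be empty, which is precisely the dichotomy in (3); for (2) the branch $\gamma((0,1))\subset\partial_{\rm top}\bar\Omega$ is excluded because the endpoints lie in the open set $\Omega$. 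Proposition \ref{thm:almostconvex} is not needed at all for this theorem.
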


\begin{proof}
   We first show that if $p\in \partial_{\rm top}\bar\Omega$, then any tangent cone taken w.r.t. $(\bar\Omega,\dist_\Omega, \vol_g\mres \bar\Omega)$ at $p$ cannot be $\R^n$. This is contained in step 3 of the proof of Theorem \ref{thm:main2}. If there is a tangent cone w.r.t. $(\bar\Omega,\dist_\Omega, \vol_g\mres \bar\Omega)$ at $p$ is $\R^n$, then there is a neighborhood $V$ of $p$ open in $\bar\Omega$ homeomorphic to $\R^n$, while there is also a neighborhood $U$ of $p$ open in $M$ homeomorphic to $\R^n$. Then the invariance of domain applied to the inclusion $U\cap V\hookrightarrow U$ show that $U\cap V$ is open in $M$ and $U\cap V\subset \Omega$, a contradiction to $p\in \partial_{\rm top}\bar\Omega$. It follows directly that $\partial_{\rm top}\bar\Omega=\partial \bar \Omega$.
   
    Consider now a minimizing geodesic $\gamma:[0,1]\to \bar\Omega$ in $(\bar\Omega,\dist_{\Omega})$. Then $\gamma((0,1))\cap \partial_{\rm top} \bar\Omega$ is relatively closed in $\gamma((0,1))$. By Proposition \ref{prop:dentofull}, $\gamma((0,1))\setminus \partial_{\rm top} \bar\Omega$ is also relatively closed, this is the set of points in $\gamma((0,1))$ having tangent cone $\R^n$. It follows from the connectedness of $\gamma((0,1))$ that either $\gamma((0,1))\setminus \partial_{\rm top} \bar\Omega$ or $\gamma((0,1))\cap \partial_{\rm top} \bar\Omega$ is empty.
\end{proof}

{
\begin{remark}
Note that Theorem \ref{thm:smooth-rcd-subset} implies that $\bar \Omega$ is locally convex in $M$ and is hence locally Alexandrov (globally Alexandrov if it is compact).
\end{remark}
}
We now move to the case where the ambient space is a $\ncRCD(K,N)$ space. With the extra assumption $\partial X=\mathcal{F}X$ and the stability of absence of boundary \cite[Theorem 1.6]{BNS20} of an $\RCD(K,N)$ space, the exact same idea can be used to prove that ${\rm Int}(X)$ is strongly geodesically convex.

\begin{corollary}\label{cor:intconv}
 Let $(X,\dist,\haus^N)$ be a $\ncRCD(K,N)$ space. Assume $\partial X=\mathcal{F} X$, then ${\rm Int}(X)\defeq X\setminus \partial X$ is strongly convex, i.e.\ any geodesic joining points in ${\rm Int}(X)$ does not intersect $\partial X$.
\end{corollary}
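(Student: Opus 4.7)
The plan is to mimic the connectedness argument from the proof of Theorem \ref{thm:smooth-rcd-subset}. Let $\gamma: [0,1] \to X$ be a minimizing geodesic with $\gamma(0), \gamma(1) \in {\rm Int}(X) = X\setminus\partial X$, and set
\[
A = \{t\in (0,1): \gamma(t)\in \partial X\},\qquad B = (0,1)\setminus A.
\]
Since $\partial X$ is closed, $A$ is relatively closed in $(0,1)$; since ${\rm Int}(X)$ is open and contains $\gamma(0)$, $B$ is nonempty. It therefore suffices to show $B$ is also relatively closed, as this forces $A=\varnothing$ by connectedness of the interval.

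Take a sequence $t_n\in B$ with $t_n\to t_0\in(0,1)$ and suppose for contradiction that $\gamma(t_0)\in \partial X = \mathcal{F}X$. By definition of $\mathcal{F}X$, some tangent cone at $\gamma(t_0)$ has the form $C(Z)$ with $\mathcal{F}Z\neq \varnothing$. Since $t_0$ is an interior point of $\gamma$, the splitting theorem implies that every tangent cone at $\gamma(t_0)$ splits isometrically as $\R \times Y$ for some noncollapsed $\RCD(0,N-1)$ metric measure cone $Y$. Writing the above tangent cone as $\R \times Y_0$, an inspection of the recursive definition \eqref{eq:KMboundary} applied to the product structure of tangent cones on $\R\times Y_0$ gives $\mathcal{F}(\R\times Y_0)=\R\times \mathcal{F}Y_0$, so $\mathcal{F}Y_0\neq \varnothing$.

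Next, I would apply Deng's H\"older continuity of tangent cones along the interior of $\gamma$ from \cite{deng2020holder}. Fix a rescaling sequence $r_k \to 0$ realizing $\R\times Y_0$ as a tangent cone of $(X,\dist)$ at $\gamma(t_0)$. For $t_n$ close to $t_0$, the rescalings $(X, r_k^{-1}\dist, \gamma(t_n))$ are pmGH-close to $(X, r_k^{-1}\dist, \gamma(t_0))$ with an error bounded by $|t_n-t_0|^\alpha$ for some $\alpha>0$; a diagonal extraction then produces tangent cones $\R\times Y_n$ at $\gamma(t_n)$ with $Y_n\to Y_0$ in pmGH. Since $\gamma(t_n)\in {\rm Int}(X) = X\setminus \mathcal{F}X$, no tangent cone at $\gamma(t_n)$ is of the form $C(Z')$ with $\mathcal{F}Z'\neq\varnothing$; in particular $\mathcal{F}(\R\times Y_n)=\R\times \mathcal{F}Y_n=\varnothing$, hence $\mathcal{F}Y_n=\varnothing$. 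The stability of the absence of boundary under pmGH convergence of noncollapsed $\RCD$ spaces \cite[Theorem 1.6]{BNS20} then forces $\mathcal{F}Y_0 = \varnothing$, contradicting the previous paragraph.

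The main obstacles will be the diagonal matching of tangent cones $\R\times Y_n\to \R\times Y_0$ extracted via Deng's H\"older continuity (which requires care because tangent cones at $\gamma(t_0)$ need not be unique), and the verification of the identity $\mathcal{F}(\R\times Y)=\R\times \mathcal{F}Y$ from the recursive structure of the Kapovitch-Mondino boundary together with the product structure of tangent cones. Once these are in hand, the connectedness argument closes the proof.
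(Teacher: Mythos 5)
Your proposal is built on exactly the two ingredients the paper uses: Deng's H\"older continuity of tangent cones along the interior of a geodesic, and the stability of absence of boundary under noncollapsed pmGH convergence from \cite[Theorem 1.6]{BNS20}. The connectedness framing (showing both $A$ and $B$ are relatively closed in $(0,1)$) and the paper's ``first boundary parameter $t_0=\sup\{t:\gamma([0,t))\cap\partial X=\varnothing\}$'' framing are equivalent bookkeeping devices; either works. Where you create work for yourself is in explicitly splitting off the line and running the stability argument on the lower-dimensional factor $Y_n$. That forces you to justify $\mathcal{F}(\R\times Y)=\R\times\mathcal{F}Y$, which you correctly flag as a gap: it is a genuine claim about the recursive Kapovitch--Mondino boundary of a product (equivalently, about the KM boundary of a spherical suspension of a cross-section), and while it is true, proving it requires passing through the equivalence of $\mathcal{F}$, $\partial$, and $\mathcal{S}^{N-1}\setminus\mathcal{S}^{N-2}=\varnothing$ in \eqref{eq:boundaryrelation} and is not available off the shelf. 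The paper bypasses this entirely by applying stability directly to the full $N$-dimensional tangent cones $Y_n\to Y_0$ extracted via H\"older continuity: $\gamma(t_n)\notin\mathcal{F}X$ translates, via \eqref{eq:boundaryrelation} and the splitting theorem (to exclude the cone tip from the top singular stratum), into $\partial Y_n=\varnothing$; stability gives $\partial Y_0=\varnothing$; hence $\gamma(t_0)\notin\mathcal{F}X$, a contradiction. Dropping the explicit product decomposition makes your argument coincide with the paper's proof and removes the only real gap in it.
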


\begin{proof}
    For a constant speed geodesic $\gamma:[0,1]\to X$ joining two points in ${\rm Int}(X)$, if $\gamma\cap \partial X\neq \varnothing$, then there exists a $t_0\in (0,1)$ such that $t_0=\sup\{t: \gamma([0,t))\cap \partial X\}=\varnothing$ and $\gamma (t_0)\in \partial X$, since $\partial X$ is closed. Note that $\gamma(t_0)$ is an interior point of $\gamma$ and for every $t\in (0,t_0)$, any tangent cone at $\gamma(t)$ does not have boundary, now the he stability of absence of boundary \cite[Theorem 1.6]{BNS20} under pmGH convergence and h\"older continuity of tangent cones along the interior of a geodesic yield that any tangent cone at $\gamma(t_0)$ has no boundary, this contradicts $\gamma(t_0)\in  \partial X=\mathcal{F}X$.
\end{proof}

	\begin{corollary}\label{cor:loc-total-geo}
		In the setting of Theorem \ref{thm:main2}, with the extra assumption that $\mathcal{F}X=\partial X$, any (minimizing) geodesic in $(\bar\Omega,\dist_\Omega)$ joining two points in $\Omega$ is a local geodesic in $(X,\dist_X)$, hence $ \Omega$ is locally totally geodesic.
	\end{corollary}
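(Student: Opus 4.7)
The plan is to combine the extrinsic-intrinsic boundary identification of Theorem \ref{thm:main2} with the strong convexity statement of Corollary \ref{cor:intconv}, the latter applied to the subspace $\bar\Omega$ itself in place of the ambient space, and then to observe that $\dist_\Omega$ agrees locally with $\dist_X$ on the open set $\Omega$.

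First I would invoke Theorem \ref{thm:main2} to conclude $\partial_{\rm top}\bar\Omega = \partial \bar\Omega$. The extra hypothesis $\mathcal{F}=\partial$ for $\ncRCD(K,N)$ spaces, being a property of the intrinsic structure of such a space, applies equally to the $\ncRCD(K,N)$ space $(\bar\Omega,\dist_\Omega,\haus^N_{\bar\Omega})$ and yields $\mathcal{F}\bar\Omega = \partial\bar\Omega$. Corollary \ref{cor:intconv} applied to $\bar\Omega$ then says that ${\rm Int}(\bar\Omega)=\bar\Omega\setminus\partial\bar\Omega$ is strongly convex in $(\bar\Omega,\dist_\Omega)$. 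Combining with the boundary identification, ${\rm Int}(\bar\Omega)=\bar\Omega\setminus\partial_{\rm top}\bar\Omega={\rm Int}_{\rm top}(\bar\Omega)=\Omega$, so every $\dist_\Omega$-geodesic joining two points of $\Omega$ remains inside $\Omega$.

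Next I would upgrade such a geodesic to a local geodesic of $(X,\dist_X)$ by a standard locality argument. Since $\Omega$ is open in $X$, for every $p\in\Omega$ one can pick $r>0$ with $B^X_r(p)\subset\Omega$. By the triangle inequality, any $\dist_X$-geodesic between points of $B^X_{r/3}(p)$ has length at most $2r/3$ and hence remains in $B^X_r(p)\subset\Omega$; in particular it is an admissible curve for the intrinsic metric $\dist_\Omega$, which forces $\dist_\Omega=\dist_X$ on $B^X_{r/3}(p)\times B^X_{r/3}(p)$. Applied at each point of the geodesic obtained in the previous step, this shows that its sufficiently short subarcs realize $\dist_X$-distance between their endpoints, which is exactly the local geodesic property.

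The only subtle point is the legitimacy of applying Corollary \ref{cor:intconv} to the subspace $\bar\Omega$, which requires interpreting the extra hypothesis $\mathcal{F}X=\partial X$ as the conjectural identification of the two boundaries on \emph{arbitrary} $\ncRCD(K,N)$ spaces (so that in particular it holds for $\bar\Omega$); once this reading is accepted the proof reduces to bookkeeping, and no new analytic input beyond Theorem \ref{thm:main2} and Corollary \ref{cor:intconv} is needed.
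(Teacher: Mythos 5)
Your proof is correct and follows exactly the route the paper intends: Theorem~\ref{thm:main2} identifies $\partial_{\rm top}\bar\Omega$ with $\partial\bar\Omega$, Corollary~\ref{cor:intconv} applied to the $\ncRCD(K,N)$ space $(\bar\Omega,\dist_\Omega,\haus^N_{\bar\Omega})$ gives that its interior is strongly convex, these two together say that a $\dist_\Omega$-geodesic between points of $\Omega$ never leaves $\Omega$, and then the standard locality argument (that $\dist_\Omega=\dist_X$ on small balls contained in $\Omega$) upgrades this to the local-geodesic property in $(X,\dist_X)$. The paper does not print a proof of Corollary~\ref{cor:loc-total-geo}; it only indicates in the introduction that the result comes from ``combining Theorem~\ref{thm:main2} and Theorem~\ref{thm:intconv},'' which is precisely your argument. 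Your observation about the hypothesis is apt: for the reduction to Corollary~\ref{cor:intconv} one needs $\mathcal{F}\bar\Omega=\partial\bar\Omega$, so ``$\mathcal{F}X=\partial X$'' in the statement has to be read as the conjectured equality of the two boundary notions for $\ncRCD(K,N)$ spaces in general (hence in particular for $\bar\Omega$); this matches the phrasing in the introduction (``with extra assumption that Kapovitch-Mondino boundary and De Philippis-Gigli boundary coincide''), so your reading is the intended one.
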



If we consider only a noncollapsed Ricci limit space with boundary $(X,\dist,\meas)$, i.e. the pmGH limit of $n$-dimensional manifolds with convex boundary and uniform Ricci curvature lower bound in the interior and uniform volume lower bound of ball of radius $1$ centered at points chosen in the pmGH convergence, then $\mathcal{F}X=\partial X$ is already verified \cite[Theorem 7.8]{BNS20}, naturally we have:

\begin{corollary}
Let $(X,\dist,\meas)$ be a noncollapsed Ricci limit space with boundary, then its interior $X\setminus\partial X$ is strongly convex.
\end{corollary}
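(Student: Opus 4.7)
The proof is essentially immediate from what has already been assembled, so the plan is simply to recognize that the hypotheses of Corollary \ref{cor:intconv} are satisfied. First I would note that any noncollapsed Ricci limit space with boundary $(X,\dist,\meas)$ is in particular a $\ncRCD(K,N)$ space, since noncollapsed Ricci limits inherit the $\ncRCD(K,N)$ condition from the stability of $\RCD(K,N)$ under pmGH convergence together with the preservation of the reference measure being $\haus^N$ under noncollapsing.

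Next I would invoke the equivalence of the two boundary notions in this setting: by \cite[Theorem 7.8]{BNS20}, which is explicitly recalled in the paragraph preceding the statement, one has $\mathcal{F}X=\partial X$ for noncollapsed Ricci limit spaces with boundary. Thus the extra assumption $\partial X=\mathcal{F}X$ required by Corollary \ref{cor:intconv} is automatically in force.

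Applying Corollary \ref{cor:intconv} then yields that $X\setminus\partial X$ is strongly convex, i.e., any geodesic joining two points in $X\setminus\partial X$ avoids $\partial X$, which is exactly the claimed conclusion.

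There is no real obstacle here — the proof is a two-line citation assembly. The only point that merits a brief mention is the implicit use of the fact that the noncollapsed Ricci limit setting produces a genuine $\ncRCD(K,N)$ structure so that both the De Philippis–Gigli boundary $\partial X$ and the Kapovitch–Mondino boundary $\mathcal{F}X$ are defined and can be compared via \cite[Theorem 7.8]{BNS20}.
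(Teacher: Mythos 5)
Your proof is correct and follows exactly the paper's intended route: invoke \cite[Theorem 7.8]{BNS20} to get $\mathcal{F}X=\partial X$ for a noncollapsed Ricci limit space with boundary, then apply Corollary \ref{cor:intconv}. The paper treats this corollary as an immediate consequence of the preceding paragraph, and your write-up matches that reasoning.
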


Due to the lack of a notion of intrinsic boundary for collapsed spaces we have been discussing noncollapsed spaces only. Without stratification of singular set, De Philippis-Gigli definition's cannot be applied, and Kapovitch-Mondino's definition also fails to provide the correct definition of boundary, as the metric horn example by Cheeger-Colding \cite[Example 8.77]{Cheeger-Colding97I} shows a collapsed Ricci limit space can have an interior cusp at which the tangent cone is a half line. Nevertheless we conjecture that Han's Theorem \ref{thm:han} holds in much larger generality, that is, a subspace in a $\ncRCD(K,N)$ ambient space along with some reference measure satisfying $\RCD(K,\infty)$ condition should still enjoy the property that geodesics in the intrinsic metric joining points in the interior remains away from boundary, and the reference measure gives measure $0$ to the topological boundary. This would provide a partial converse ( different from local-to-global theorem) to the well-known global-to-local theorem for $\RCD(K,\infty)$ spaces from \cite[Theorem 6.18]{AGS14a}:  

 \begin{theorem}
    Let $Y$ be a weakly geodesically convex closed subset of an $\RCD(K,\infty)$ space $(X, \dist,\meas)$ such that $\meas(Y)>0$ and $\meas(\partial_{\rm top}Y)=0$. Then $(Y,\dist_Y, \meas\mres Y)$ is also an $\RCD(K,\infty)$ space.
 \end{theorem}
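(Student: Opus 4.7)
The plan is to verify the two defining pillars of $\RCD(K,\infty)$ for $(Y,\dist_Y,\meas\mres Y)$ separately: displacement $K$-convexity of the Boltzmann entropy and infinitesimal Hilbertianity of the Cheeger energy. The crucial bridge is that weak geodesic convexity forces $\dist_Y$ to agree with the restriction of $\dist$ on $Y\times Y$, hence $W_{2,Y}=W_{2,X}$ on $\mathcal{P}_2(Y)$, while $\meas(\partial_{\rm top}Y)=0$ ensures that entropies with respect to $\meas$ and $\meas\mres Y$ coincide on absolutely continuous probability measures supported in $Y$.

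For the $\CD$ part, given $\mu_0,\mu_1\in\mathcal{P}_2(Y)$ with $\mu_i\ll\meas\mres Y$ and finite entropy, I would view them as measures on $X$. Since $X$ is $\RCD(K,\infty)$ there is a unique $W_2$-geodesic $(\mu_t)$ in $\mathcal{P}_2(X)$ joining them with $\mu_t\ll\meas$ for every $t$. Using weak geodesic convexity together with a Borel selection applied to the multivalued map
\[
(x,y)\mapsto\{\gamma\in\mathrm{Geo}(X):\gamma_0=x,\ \gamma_1=y,\ \gamma([0,1])\subset Y\},
\]
one produces a dynamical optimal plan $\ppi$ concentrated on geodesics entirely contained in $Y$; then $(e_t)_\sharp\ppi$ is a $W_2$-geodesic in $\mathcal{P}_2(X)$ supported in $Y$, and the $\RCD(K,\infty)$ uniqueness of $W_2$-geodesics between absolutely continuous measures forces it to coincide with $(\mu_t)$. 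Plugging $(\mu_t)$ into the $\CD(K,\infty)$ inequality on $X$,
\[
\mathrm{Ent}_{\meas}(\mu_t)\le (1-t)\,\mathrm{Ent}_{\meas}(\mu_0)+t\,\mathrm{Ent}_{\meas}(\mu_1)-\tfrac{K}{2}t(1-t)\,W_{2,X}^2(\mu_0,\mu_1),
\]
and using $\supp(\mu_t)\subset Y$ together with $\meas(\partial_{\rm top}Y)=0$ to identify $\mathrm{Ent}_{\meas}(\mu_t)$ with $\mathrm{Ent}_{\meas\mres Y}(\mu_t)$, as well as $W_{2,Y}=W_{2,X}$, yields the $\CD(K,\infty)$ inequality for the intrinsic structure of $Y$.

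For infinitesimal Hilbertianity I would compare the Cheeger energies directly: McShane extension sends Lipschitz functions on $(Y,\dist_Y)$ to Lipschitz functions on $X$ without changing their local slopes on $Y$, while restriction proceeds in the reverse direction thanks to $\dist_Y=\dist|_{Y\times Y}$. This identifies $\Ch^Y$ with the trace of $\Ch^X$ on the subspace $L^2(Y,\meas\mres Y)\hookrightarrow L^2(X,\meas)$ modulo the null set $\partial_{\rm top}Y$, so the parallelogram identity for $\Ch^X$ descends to $\Ch^Y$.

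The main obstacle is the first step: producing, from the purely pointwise assumption of weak geodesic convexity, an entire $W_2$-geodesic in the ambient space that stays in $Y$. This requires a careful measurable selection that lifts geodesic convexity to the level of dynamical optimal plans, combined with the uniqueness of $W_2$-geodesics in $\RCD(K,\infty)$ spaces between absolutely continuous measures. Once this geometric fact is in place the $\CD$ inheritance is essentially formal, and infinitesimal Hilbertianity reduces to the observation that the first-order calculus on $Y$ agrees with that of $X$ away from $\partial_{\rm top}Y$, which is the only locus where $\dist_Y$ and $\dist$ could disagree in an essential way and is negligible by assumption.
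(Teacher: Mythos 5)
The paper does not prove this statement; it is quoted verbatim from \cite[Theorem 6.18]{AGS14a} as background for Conjecture~5.7, so there is no proof in the paper to compare against. Judging your reconstruction on its own terms: the $\CD(K,\infty)$ half is sound. Weak geodesic convexity does give $\dist_Y=\dist|_{Y\times Y}$ and hence $W_{2,Y}=W_{2,X}$ on $\mathcal{P}_2(Y)$; the set of geodesics of $X$ contained in $Y$ is closed in $\mathrm{Geo}(X)$ and the endpoint map restricted to it is a continuous surjection onto $Y\times Y$, so a Borel selection exists; pushing an optimal static plan through it yields an optimal dynamical plan concentrated on geodesics lying in $Y$, and uniqueness of optimal dynamical plans between $\meas$-absolutely continuous marginals in $\RCD(K,\infty)$ spaces (Rajala--Sturm) forces the $W_2$-geodesic of $X$ to live in $Y$. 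The entropy identification is correct since finite entropy already forces absolute continuity and the densities with respect to $\meas$ and $\meas\mres Y$ agree.

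The infinitesimal-Hilbertianity half as written has a genuine gap. There is no natural embedding $W^{1,2}(Y,\dist_Y,\meas\mres Y)\hookrightarrow W^{1,2}(X,\dist,\meas)$ under which $\Ch^Y$ becomes a ``trace'' of $\Ch^X$: extending by zero destroys Sobolev regularity across $\partial_{\rm top}Y$, while a McShane extension of a Lipschitz competitor on $Y$, although slope-preserving on ${\rm Int}_{\rm top}(Y)$, is in general not in $L^2(X,\meas)$ when $X$ is non-compact (take $Y=[0,1]\subset X=\R$ and $f\equiv 1$, whose McShane extension is the constant $1$), so it is not an admissible competitor for $\Ch^X$. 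What is actually needed is a locality argument: show that for $f$ in the intrinsic Sobolev space of $Y$, the minimal relaxed slope computed in $Y$ agrees $\meas$-a.e.\ on ${\rm Int}_{\rm top}(Y)$ --- which carries all of $\meas\mres Y$ since $\meas(\partial_{\rm top}Y)=0$ --- with the minimal relaxed slope in $X$ of any local extension, using that ${\rm Int}_{\rm top}(Y)$ is an open subset of $X$ on which the two metric-measure structures literally coincide. With that pointwise identification, the a.e.\ parallelogram identity for minimal relaxed slopes, which is equivalent to quadraticity of $\Ch^X$, passes to $\Ch^Y$. Your high-level picture is right, but the ``trace'' framing conceals precisely the step that requires proof.
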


More precisely, we conjecture that

\begin{conjecture}\label{conj:collapseconv}
    Let $\Omega$ be an open subset in a $\ncRCD(K,N)$ space $(X,\dist_X, \haus^N)$, where $N$ is a positive integer, so that for some Radon measure $\mu$ with $\supp \mu=\bar\Omega$, $(\bar{\Omega}, \dist_{\Omega},\mu)$ is an $\RCD(K,\infty)$ space. Assume that $\partial_{\rm top} \Omega$ is $\haus^{N-1}$-rectifiable, then $\mu\mres \Omega\ll \haus^N\mres\Omega$ and $\mu(\partial_{\rm top} \Omega)=0$ and every geodesic joining two points in $\Omega$ w.r.t. $\dist_\Omega$ does not intersect $\partial_{\rm top}\bar\Omega$ hence a local geodesic w.r.t. $\dist_X${, in particular, $\Omega$ is locally totally geodesic}.
\end{conjecture}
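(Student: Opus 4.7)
The plan is to extend the strategy used in Theorem \ref{thm:main2} and Han's Theorem \ref{thm:han} to the collapsed setting, using the $\haus^{N-1}$-rectifiability of $\partial_{\rm top}\Omega$ as a partial substitute for the noncollapsed structure of $\bar\Omega$. I would proceed in three stages: absolute continuity on $\Omega$, vanishing of $\mu$ on the topological boundary, and geodesic avoidance of the boundary.

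The first two stages would rely on Cavalletti--Mondino one-dimensional localization with respect to a well-chosen $1$-Lipschitz function on $(\bar\Omega, \dist_\Omega, \mu)$. Since on any sufficiently small open $V \subset \Omega$ the distances $\dist_\Omega$ and $\dist_X$ agree and the conditional measures on transport rays satisfy a $\CD(K,\infty)$ condition, they are absolutely continuous with respect to $\haus^1$; combined with the $\haus^N$-rectifiable structure of the regular set $\mathcal{R}_n(X)$ (which carries full $\haus^N$-measure in $X$) and a Fubini-type assembly along the ray decomposition, one obtains $\mu \mres \Omega \ll \haus^N \mres \Omega$. For the boundary estimate, the rectifiability hypothesis means that $\partial_{\rm top}\Omega$ is covered by countably many Lipschitz images of subsets of $\R^{N-1}$ and is therefore $\haus^N$-negligible in $X$; choosing a Kantorovich potential (or a perturbation of $\dist_X(\cdot, x_0)$) whose transport rays meet the almost-everywhere tangent planes of $\partial_{\rm top}\Omega$ transversally, one argues that $\haus^1$-a.e.\ ray intersects $\partial_{\rm top}\Omega$ in an $\haus^1$-null set and concludes $\mu(\partial_{\rm top}\Omega)=0$ by Fubini.

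For the locally totally geodesic property, let $\gamma : [0,1] \to \bar\Omega$ be a $\dist_\Omega$-minimizing geodesic with endpoints in $\Omega$ and suppose for contradiction that $\gamma$ meets $\partial_{\rm top}\Omega$ at some interior time $t_0$. For $t\in (0,t_0)$ we have $\gamma(t)\in\Omega$, and a neighborhood of $\gamma(t)$ in $\bar\Omega$ is isometric to an open set in the $\ncRCD(K,N)$ space $X$, so the tangent cone there is Euclidean $\R^N$. Using the H\"older continuity of tangent cones along interiors of geodesics (Deng, \cite{deng2020holder}) together with the stability of the absence of boundary (\cite[Theorem~1.6]{BNS20}), one would aim to conclude that at $\gamma(t_0)$ the tangent cone inside $X$ is still $\R^N$ and that the blown-up boundary retains a nontrivial topological component, at which point the blow-up argument of Steps 4--5 of the proof of Theorem \ref{thm:main2} yields a contradiction to invariance of domain for $\R^N$. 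Combined with the previous $\mu$-negligibility of the boundary, this also rules out the alternative that $\gamma$ is entirely contained in $\partial_{\rm top}\Omega$.

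The hardest step is the convexity part. In the noncollapsed setting of Theorem \ref{thm:main2} the density drop $\Theta_{\bar\Omega}(q)\le 1-\delta$ at a non-manifold point was the critical input forcing the blown-up image to be strictly smaller than the ambient $\R^N$; here, with $\mu$ only $\RCD(K,\infty)$ and lacking any Bishop--Gromov inequality, one has no such density and must manufacture a substitute from the rectifiability of the boundary, for instance by comparing $\mu$-masses of half-balls on the two sides of the rectifiable boundary via a calibrated optimal transport argument. A secondary but substantial difficulty is that $\RCD(K,\infty)$ spaces do not have a well-developed stratification or uniqueness-of-tangents theory, so any blow-up argument performed on $\bar\Omega$ must be phrased carefully through the ambient $\ncRCD(K,N)$ space, which is precisely where the extrinsic viewpoint promoted in this paper becomes essential.
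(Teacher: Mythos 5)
The statement you are attempting to prove is Conjecture~\ref{conj:collapseconv}; the paper does \emph{not} prove it. It is explicitly left open, and the authors themselves point out why: in the collapsed setting there is no working intrinsic notion of boundary (they cite the Cheeger--Colding metric horn as showing that the Kapovitch--Mondino definition fails) and no Bishop--Gromov density for the reference measure $\mu$, so the machinery behind Theorems~\ref{thm:main1} and~\ref{thm:main2} does not transfer. Your proposal is therefore being measured against an absence, and as written it does not close the gap; it is closer to a restatement of the obstacles than to a proof.

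Concretely, both measure-theoretic stages are unsupported. Cavalletti--Mondino localization produces a decomposition $\mu=\int_Q \mu_\alpha\,\mathfrak q(d\alpha)$ with $\mu_\alpha\ll\haus^1$ along rays, but you have no control over the quotient measure $\mathfrak q$, which could be singular in any direction transverse to the rays; there is no Fubini theorem turning ``$\haus^1$-a.e.\ ray meets $S$ in an $\haus^1$-null set'' into ``$\mu(S)=0$'' without first establishing a coarea-type relation between the ray decomposition of $\mu$ and $\haus^N$, which is exactly the absolute continuity you are trying to prove. The transversality step is likewise unjustified: in a general $\RCD(K,\infty)$ space you cannot choose a $1$-Lipschitz potential whose rays provably cross the approximate tangent planes of a given $\haus^{N-1}$-rectifiable set transversally, and $\haus^N$-negligibility of $\partial_{\rm top}\Omega$ in $X$ says nothing about $\mu$-measure, which can a priori concentrate on $\haus^N$-null sets.

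The convexity stage has a structural problem you partially acknowledge but do not resolve. Deng's H\"older continuity of tangent cones and the \cite[Theorem~1.6]{BNS20} stability of absence of boundary are $\RCD(K,N)$ results with finite $N$ and do not apply to $(\bar\Omega,\dist_\Omega,\mu)$ under the mere $\RCD(K,\infty)$ hypothesis; running them on the ambient $X$ alone does not detect the behaviour of $\mu$ or of $\partial_{\rm top}\bar\Omega$. More seriously, Step~4 of Theorem~\ref{thm:main2} hinges on the quantitative density drop $\Theta_{\bar\Omega}(q)\le 1-\delta$, which is a Bishop--Gromov consequence of the noncollapsed reference measure $\haus^N$; for a general $\mu$ there is no density function, no Reifenberg-type manifold neighbourhood theorem, and hence no way to turn ``$q$ is on the topological boundary'' into a measurable defect that survives blow-up. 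Your proposed substitute (``comparing $\mu$-masses of half-balls via a calibrated optimal transport argument'') is a placeholder, not an argument. The overall plan is a reasonable research direction---indeed it mirrors the motivation the authors give for stating the conjecture---but it is not a proof.
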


\section{ Almost convexity}\label{sec:almost}

\subsection{1-D localization}
We minimally collect the elements of the localization technique introduced in \cite{Cav14} and \cite{CavMon15}, we remark that this technique is available for a much general class of metric measure spaces, the so called essentially non-branching ${\rm MCP}(K,N)$ spaces, which contains essentially non-branching $\CD(K,N)$ spaces, hence $\RCD(K,N)$ spaces (\cite{RajalaSturm12}). 

Let $(X,\dist,\meas)$ be an $\RCD(K,N)$ space, $u$ be a $1$-Lipschitz function. Define the transport set induced by $u$ as:
\[
\Gamma(u)\defeq\{(x,y)\in X\times X: u(x)-u(y)=\dist(x,y)\},
\]
and its transpose as $\Gamma^{-1}(u)\defeq \{(x,y)\in X\times X: (y,x)\in \Gamma(u)\}$. The union $R_u\defeq \Gamma^{-1}(u)\cup \Gamma(u)$ defines a relation on $X$. By excluding negligible isolated and branching points, one can find a transport set $\mathcal{T}_u$ such that $\meas(X\setminus\mathcal{T}_u)=0$ and $R_u$ restricted to $\mathcal{T}_u$ is an equivalence relation. So there is a partition of $\mathcal{T}_u:=\cup_{\alpha\in Q} X_{\alpha}$, where $Q$ is a set of indices, denote by $\mathfrak{Q}:\mathcal{T}_u\to Q$ the quotient map. In \cite[Proposition 5.2]{Cav14}, it is shown that there exists a measurable selection $s:\mathcal{T}_u\to \mathcal{T}_u$ such that if $x R_u y$ then $s(x)=s(y)$, so we can identify $Q$ as $s(\mathcal{T}_u)\subset X$. Equip $Q$ with the $\sigma$-algebra induced by $\mathfrak{Q}$ and the measure $\mathfrak{q}\defeq \mathfrak{Q}_{\sharp}(\meas\mres\mathcal{T}_u)$, we can hence view $\mathfrak{q}$ as a Borel measure on $X$. Furthermore, each $X_{\alpha}$ is shown (\cite[Lemma 3.1]{CavMon15}) be to isometric to an interval $I_{\alpha}$, the distance preserving map $\gamma_{\alpha}: I_{\alpha}\to X_{\alpha}$ extend to an geodesic still denoted by $\gamma_{\alpha}:\bar{I}_{\alpha}\to X$. Putting several results together, we have (\cite[Theorem A.5]{KapMon19}):

\begin{theorem}\label{thm:disint}
    Let $(X,\dist,\meas)$ be an $\RCD(K,N)$ space. $u$ be a $1$-Lipschitz function. Then $\meas$ admits a disintegration:
    \[
    \meas=\int_{Q}\meas_{\alpha}\mathfrak{q}(\dd\alpha),
    \]
    where $\meas_{\alpha}$ is a non-negative Radon measure on $X$, such that 
    \begin{enumerate}
        \item For any $\meas$-measurable set $B$, the map $\alpha\mapsto \meas_{\alpha}(B)$ is $\mathfrak{q}$-measurable.
        \item\label{item:strcons} for $\mathfrak{q}$-a.e.\ $\alpha\in Q$, $\meas_{\alpha}$ is concentrated on $X_{\alpha}=\mathfrak{Q}^{-1}(\alpha)$. This property is called strong consistency of the disintegration.
        \item \label{item:disint} for any $\meas$-measurable set $B$ and $\mathfrak{q}$-measurable set $C$, it holds
        \[
        \meas(B\cap \mathfrak{Q}^{-1}(C))=\int_C \meas_{\alpha}(B)\mathfrak{q}(\dd\alpha).
        \]
        \item\label{item:pos} for $\mathfrak{q}$-a.e.\ $\alpha\in Q$, $\meas_{\alpha}=h_{\alpha}\haus^1\mres X_{\alpha}\ll\haus^1\mres X_{\alpha}$, where $h_{\alpha}$ is a $\log$ concave density, and $(\bar{X}_{\alpha}, \dist,\meas_{\alpha})$ is an $\RCD(K,N)$ space.
    \end{enumerate}
\end{theorem}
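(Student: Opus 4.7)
The plan is to follow the now-standard localization machinery developed by Cavalletti and Cavalletti--Mondino, adapting it to the $\RCD(K,N)$ setting (where essential non-branching of $\CD(K,N)$ spaces is at our disposal). The overall approach has four logically separated stages: (i) build the measurable equivalence-relation structure of the transport set $\mathcal{T}_u$; (ii) apply an abstract disintegration theorem to produce the family $\{\meas_\alpha\}$; (iii) identify each conditional as a one-dimensional measure on a geodesic $X_\alpha$; and (iv) upgrade the qualitative disintegration to the quantitative $\CD(K,N)$/$\RCD(K,N)$ information along each ray, which yields in particular the log-concavity of $h_\alpha$.

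First I would recall that $R_u$ fails to be an equivalence relation on all of $X$ only because of branching and isolated points of transport rays. Following \cite{Cav14}, one removes these exceptional sets (shown to be $\meas$-negligible using essential non-branching of $\RCD(K,N)$ spaces \cite{RajalaSturm12}) to obtain the transport set $\mathcal{T}_u$ on which $R_u$ is a genuine equivalence relation whose classes $X_\alpha$ are unions of maximal transport rays. A measurable selection of one point per class (existence guaranteed by the Kuratowski--Ryll-Nardzewski selection theorem in this Polish setting) gives $s:\mathcal{T}_u \to Q \subset X$ and turns $\mathfrak{Q}=s$ into a Borel quotient map; pushing $\meas\mres \mathcal{T}_u$ forward by $\mathfrak{Q}$ defines the quotient measure $\mathfrak{q}$.

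Next I would invoke the classical disintegration theorem for $\sigma$-finite Borel measures on Polish spaces (in the form used in \cite[\S452]{Fremlin} or as written up for this purpose in \cite{CavMon15}). Strong consistency, item \ref{item:strcons}, follows from the fact that $\mathfrak{Q}^{-1}(\alpha) = X_\alpha$ is exactly the equivalence class through the representative $\alpha$; item \ref{item:disint} is the defining property of the disintegration applied to sets of the form $B \cap \mathfrak{Q}^{-1}(C)$. Since each $X_\alpha$ is isometric to an interval via $\gamma_\alpha$, the pushforward $(\gamma_\alpha^{-1})_{\#}\meas_\alpha$ is a Borel measure on an interval of $\R$, so item \ref{item:pos} is really two statements: absolute continuity with respect to $\haus^1\mres X_\alpha$, and log-concavity of the density (together with the fact that $(\bar X_\alpha, \dist, \meas_\alpha)$ is itself $\RCD(K,N)$).

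The main obstacle is precisely this last, quantitative part. The plan is to exploit that the disintegration is induced by a $1$-Lipschitz function, so the optimal transport plan that realizes $u$'s Kantorovich potential decomposes along the rays $\{X_\alpha\}_{\alpha\in Q}$. Applying the $\CD(K,N)$ condition to suitably chosen one-dimensional marginals concentrated inside a single $X_\alpha$, one obtains the Borell--Brascamp--Lieb type inequality on the density $h_\alpha$, which is exactly log-concavity in the $\CD(K,N)$ sense; this is carried out in \cite{CavMon15} and relies decisively on the essentially non-branching hypothesis so that mass transported along the rays does not interfere across different equivalence classes. The $\RCD(K,N)$ property of $(\bar X_\alpha, \dist, \meas_\alpha)$ is then immediate, since a one-dimensional $\CD(K,N)$ space is automatically $\RCD(K,N)$. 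Assembling these pieces gives the full statement.
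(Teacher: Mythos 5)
Your outline is correct and matches what the paper does: the paper itself gives no original proof of Theorem~\ref{thm:disint}, but simply recalls the transport-set/quotient-map setup from \cite{Cav14} and \cite{CavMon15} and then cites the result as \cite[Theorem A.5]{KapMon19}, which assembles exactly the Cavalletti and Cavalletti--Mondino localization steps you describe (removal of branching/isolated points using essential non-branching, measurable selection, abstract disintegration on a Polish space, identification of each $X_\alpha$ as an interval, and transfer of the $\CD(K,N)$ bound to the one-dimensional conditionals). Your four-stage reconstruction is therefore faithful to the intended provenance of the theorem.
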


\subsection{ Proof of Proposition \ref{thm:almostconvex} and Consequences}

\begin{proof}[Proof of Proposition \ref{thm:almostconvex}]
Take $x\in X$, disintegrate $\meas$ w.r.t $\dist_x\defeq \dist(x,\cdot)$. Item \ref{item:disint} in Theorem \ref{thm:disint} yields that 
\begin{equation}
    \begin{split}
        0=\meas(X\setminus \mathcal{R}_n)=\int_Q \meas_\alpha(X\setminus \mathcal{R}_n)\mathfrak{q}(\dd\alpha).
    \end{split}
\end{equation}
Then for $\mathfrak{q}$-a.e.\ $\alpha\in Q$, $\meas_\alpha(X\setminus \mathcal{R}_n)=0$, we set $\widetilde{Q}\defeq \{\alpha\in Q: \meas_{\alpha}(X\setminus \mathcal{R}_n)=0 \}$, then $R_x\defeq (\cup_{\alpha\in \widetilde{Q}}X_{\alpha})\cap \mathcal{R}_n$ is the desired set. Indeed, for any $y\in R_x$, there is a geodesic (segment) $\gamma$ contained in $X_\alpha$ joining $x,y$, for some $\alpha\in \widetilde{Q}$, with $\meas_{\alpha}(\gamma\setminus \mathcal{R}_n)=0$.  the $\log$-concavity of $h_{\alpha}$ implies that $h_{\alpha}$ is $\mathcal{H}^1$ a.e.\ positive on $X_{\alpha}$, so we get that $\haus^1\mres X_\alpha(\gamma\setminus \mathcal{R}_n)=0$, which in turn implies that regular points of essential dimension is dense in the interior of $\gamma$. Now apply Proposition \ref{prop:dentofull}, we see that the interior of $\gamma$ is entirely in $\mathcal{R}_n$ and the end point $y$ is also in $\mathcal{R}_n$. 
\end{proof}


{ Since $\mathcal R_n\subset {\rm Int}(X)$ Proposition \ref{thm:almostconvex} immediately implies almost convexity of  ${\rm Int}(X) = X\setminus \partial X$. }


\begin{corollary}
Let $(X,\dist,\haus^N)$ be a $\ncRCD(K,N)$ space. For \textit{every} $x\in {\rm Int}(X)$,
 there exists a subset $R_x\subset {\rm Int}(X)$ so that $\meas(X\setminus R_x)=0$ and for any $y\in R_x$ there is a minimizing geodesic joining $x,y$ and entirely contained in ${\rm Int}(X)$. 

\end{corollary}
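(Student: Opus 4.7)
The plan is to deduce the corollary directly from Proposition \ref{thm:almostconvex}. Since $(X,\dist,\haus^N)$ is a noncollapsed $\RCD(K,N)$ space, its essential dimension equals $N$ (by \cite[Theorem 1.12]{DPG17}), so the parameter $n$ appearing in Proposition \ref{thm:almostconvex} equals $N$ and the full-measure regular stratum is $\mathcal{R}_N$. Applying that proposition at the given base point $x\in {\rm Int}(X)$---permissible because the statement holds for every $x\in X$---produces a set $R_x\subset\mathcal{R}_N$ with $\haus^N(X\setminus R_x)=0$ together with, for each $y\in R_x$, a minimizing geodesic $\gamma_{xy}$ from $x$ to $y$ whose image lies in $\mathcal{R}_N\cup\{x\}$.

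The only remaining task is to promote the target from $\mathcal{R}_N\cup\{x\}$ to ${\rm Int}(X)=X\setminus\partial X$. Since $x\in {\rm Int}(X)$ by hypothesis, this reduces to the inclusion $\mathcal{R}_N\subset {\rm Int}(X)$, which is the fact highlighted in the paragraph immediately preceding the corollary. I would verify it as follows: at any $p\in\mathcal{R}_N$ one has $\Theta_N(p)=1$, so Theorem \ref{thm:regular} supplies an open neighborhood $U$ of $p$ on which $\Theta_N>1-\delta$ for $\delta=\delta(K,N)$ arbitrarily small, while points of $\mathcal{S}_{N-1}\setminus\mathcal{S}_{N-2}$ are boundary-type singular points whose density is uniformly bounded away from $1$. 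Combined with the upper semicontinuity of $\Theta_N$ and the stability of absence of boundary under pmGH convergence (\cite[Theorem 1.6]{BNS20}), this forces $U\cap\overline{\mathcal{S}_{N-1}\setminus\mathcal{S}_{N-2}}=\varnothing$, i.e.\ $p\notin\partial X$.

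With the inclusion $\mathcal{R}_N\subset {\rm Int}(X)$ in hand, the corollary follows at once: each $\gamma_{xy}$ has its $x$-endpoint in ${\rm Int}(X)$ by assumption and its $y$-endpoint together with its interior in $\mathcal{R}_N\subset {\rm Int}(X)$, so it lies entirely in ${\rm Int}(X)$, and $R_x\subset\mathcal{R}_N\subset {\rm Int}(X)$ as required. The main and essentially only obstacle is the verification of $\mathcal{R}_N\subset{\rm Int}(X)$; modulo this structural fact, the statement is a one-line consequence of Proposition \ref{thm:almostconvex}.
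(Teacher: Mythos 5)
Your proof is correct and follows the paper's route exactly: invoke Proposition~\ref{thm:almostconvex} at the given $x\in{\rm Int}(X)$ (with $n=N$ by noncollapsing) and combine with the inclusion $\mathcal{R}_N\subset{\rm Int}(X)$, which handles both $R_x$ and all points of $\gamma_{xy}$ other than $x$ itself, the latter being in ${\rm Int}(X)$ by hypothesis. The paper merely asserts $\mathcal{R}_N\subset{\rm Int}(X)$ without proof, and your density argument is the right justification --- the open set $\{\Theta_N>1-\delta\}$ from Theorem~\ref{thm:regular} contains $\mathcal{R}_N$ and is disjoint from $\mathcal{S}_{N-1}\setminus\mathcal{S}_{N-2}$ (points whose density is $1/2$), so no $p\in\mathcal{R}_N$ can lie in the closure $\partial X$ --- though the extra appeals to upper semicontinuity of $\Theta_N$ and to stability of absence of boundary under pmGH convergence are unnecessary once one has the openness of $\{\Theta_N>1-\delta\}$.
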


 
  We then naturally obtain the following corollary.

\begin{corollary}
In the setting of Theorem \ref{thm:main2}, for every point $x\in \Omega$, there exists a set $\mathcal{R}_x\subset\Omega$ such that $\haus^N(\bar\Omega\setminus\mathcal{R}_x)=0$ and for every $y\in\mathcal{R}_x$, there is a minimizing geodesic in $(\bar\Omega,\dist_\Omega)$ joining $x,y$ lies entirely in $\Omega$, hence a local geodesic in $(X,\dist_X)$, {i.e., $\Omega$ is almost locally totally geodesic}. 
\end{corollary}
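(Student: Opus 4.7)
The plan is to deduce this corollary by combining Theorem \ref{thm:main2} and Proposition \ref{thm:almostconvex} with the elementary observation that $\dist_\Omega$ and $\dist_X$ agree locally on the open set $\Omega$.

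First I would note that Theorem \ref{thm:main2} gives $\partial\bar\Omega=\partial_{\rm top}\bar\Omega$, and since $(\bar\Omega,\dist_\Omega,\haus^N_{\bar\Omega})$ is noncollapsed its essential dimension equals $N$. From the definition $\partial\bar\Omega=\overline{\mathcal{S}_{N-1}\setminus\mathcal{S}_{N-2}}\subset\mathcal{S}$ it follows that any regular point of $\bar\Omega$ avoids $\partial\bar\Omega$, that is,
\[
\mathcal{R}_N(\bar\Omega)\subset\bar\Omega\setminus\partial\bar\Omega=\bar\Omega\setminus\partial_{\rm top}\bar\Omega=\Omega.
\]

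Next I would apply Proposition \ref{thm:almostconvex} to the $\RCD(K,N)$ space $(\bar\Omega,\dist_\Omega,\haus^N_{\bar\Omega})$ at the chosen point $x\in\Omega$. This furnishes a set $\mathcal{R}_x\subset\mathcal{R}_N(\bar\Omega)$ with $\haus^N_{\bar\Omega}(\bar\Omega\setminus\mathcal{R}_x)=0$ such that for every $y\in\mathcal{R}_x$ there is a $\dist_\Omega$-minimizing geodesic $\gamma$ from $x$ to $y$ with $\gamma\setminus\{x\}\subset\mathcal{R}_N(\bar\Omega)$. By the inclusion established in the previous paragraph together with $x\in\Omega$, the entire curve $\gamma$ lies in $\Omega$. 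Remark \ref{rmk:equiHaus} lets us replace $\haus^N_{\bar\Omega}$ by $\haus^N$ in the conclusion without change.

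To upgrade $\gamma$ to a local geodesic of $(X,\dist_X)$, I would use that $\Omega$ is open in the geodesic space $X$: for each $p\in\gamma\subset\Omega$ pick $r>0$ with $B_{3r}^X(p)\subset\Omega$, so that any $\dist_X$-geodesic between points of $B_r^X(p)$ stays inside $\Omega$ and hence $\dist_\Omega=\dist_X$ on $B_r^X(p)$. Covering the compact curve $\gamma$ by finitely many such balls shows it is a local geodesic in $(X,\dist_X)$. The only real subtlety is the regularity inclusion $\mathcal{R}_N(\bar\Omega)\subset\Omega$: without Theorem \ref{thm:main2} a regular point of $\bar\Omega$ could in principle lie on $\partial_{\rm top}\bar\Omega$, which would spoil the conclusion. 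Once this inclusion is in place, the rest is bookkeeping.
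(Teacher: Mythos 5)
Your proof is correct and follows essentially the same route as the paper: the paper first records (from Proposition~\ref{thm:almostconvex} and the inclusion $\mathcal{R}_N(\bar\Omega)\subset{\rm Int}(\bar\Omega)$) that for every $x\in{\rm Int}(\bar\Omega)$ there is a full-measure set of $y$'s connected to $x$ by $\dist_\Omega$-geodesics inside ${\rm Int}(\bar\Omega)$, then substitutes ${\rm Int}(\bar\Omega)=\bar\Omega\setminus\partial\bar\Omega=\bar\Omega\setminus\partial_{\rm top}\bar\Omega=\Omega$ via Theorem~\ref{thm:main2}, and finally observes that $\dist_\Omega$ and $\dist_X$ agree locally on $\Omega$. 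One small inaccuracy in your write-up: the inclusion $\partial\bar\Omega=\overline{\mathcal{S}_{N-1}\setminus\mathcal{S}_{N-2}}\subset\mathcal{S}$ does not follow merely ``from the definition,'' since the closure of a subset of $\mathcal{S}$ need not remain in $\mathcal{S}$; the correct justification for $\mathcal{R}_N\subset{\rm Int}$ is the density bound $\Theta_N\le 1/2$ on $\partial\bar\Omega$ from \cite{BNS20}, which is what the paper invokes implicitly when it states $\mathcal{R}_n\subset{\rm Int}(X)$. Otherwise your bookkeeping, including the local coincidence of $\dist_\Omega$ and $\dist_X$ on balls $B_r^X(p)\subset\Omega$ and the Remark~\ref{rmk:equiHaus} substitution of Hausdorff measures, is exactly as in the paper.
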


\bibliographystyle{alpha}
\bibliography{convex}
\end{document}